\documentclass[12pt]{amsart}
\usepackage{epsf}
\def\mathbb#1{\mbox{\amss#1}}
\font \amss=msbm10 at 12pt
\usepackage{amsmath}
\usepackage{amssymb}
\usepackage{amsthm}
\newtheorem{teorema}{Theorem}
\newtheorem{predlozhenie}{Proposition}
\newtheorem{lemma}{Lemma}
\newtheorem{remark}{Remark}

\theoremstyle{definition}

\author{E.~Volkov}
\address{Inst. f\"ur Math. Humboldt-Universit\"at zu Berlin, \newline
 Rudower Chaussee 25, 12489 Berlin, Germany}
\email{volkov@math.hu-berlin.de}
\title{A new approach to the family of singularities $Re(x+iy)^m$.}
\date{September 2008}

\begin{document}

\begin{abstract} Assume that $m\ge 2$ and let $l$ be a nonnegative integer with $l\ge m-4$. We give an alternative proof of the fact that any smooth function defined locally around $(0,0)\in \mathbb{R}^2$ with the Taylor power series at $(0,0)$ beginning with $$Re(x+iy)^m+0+...+0$$ ($l$ zeros) is diffeomorphically equivalent to $Re(x+iy)^m$ at $(0,0)$. For $m\ge 5$ and $C\ne 0$ we show that the function $$Re(x+iy)^m+C(x^2+y^2)^{m-2}$$ is not diffeomorphically equivalent to
$Re(x+iy)^m$ at $(0,0)$.
\end{abstract}

\maketitle

\tableofcontents

\section{Introduction.} \label{intr}
\subsection{Setup. Main result. History.} \label{setup}

Assume, we have a class of smooth functions with a common isolated critical point. A normal form for this class is a special member 
of it which can be brought to any other by a smooth coordinate change in some open neighbourhood of the critical point. The word ``special'' usually means 
``polynomial'' and the class of functions is usually specified by some local data at the critical point: the Hessian, the first several terms of the Taylor power series etc. The first example of a normal form is provided by the Lemma 
of Morse: a smooth function with a nondegenerate critical point in a suitable coordinate system looks like a constant term plus an algebraic sum of the squares. The number of minuses in the algebraic sum is determined by the signature of the Hessian. Can we hope for any kind of ``Morse Lemma'' if the 
critical point is degenerate? This was the starting point for the singularity 
theory of smooth functions. 

The following terminology is useful. 
Let $f_1$ and $f_2$ be two real valued functions defined on an open neighbourhood of the point $p$ in $\mathbb{R}^d$. We say that $f_1$ and $f_2$ are (diffeomorphically) equivalent at $p$ and write $f_1\sim f_2$ if $f_1$ can be brought to $f_2$ by a smooth
coordinate change around $p$ fixing $p$. If $f_1$ is equivalent to $f_2$ at $p$, then 
no generality is lost in assuming that $p=(0,...,0)$ and $f_1(p)=f_2(p)=0$. 
All the members of a class that has a normal form are equivalent to each other
(and to the normal form) at the critical point.

Now we turn to the history, which roughly went as follows. 
In late 50th early 60th J.~Nash and J.~Moser
developed a theory which enables one to write clever implicit function 
theorems in the smooth category. 
The theory grew out of the famous Nash embedding
theorem from 1956. In 1968 Samoilenko (cf. \cite{Sam}) used this theory to 
prove a fundamental 
result in singularity theory. Every smooth function near an isolated critical 
point of finite order is equivalent to its 
truncated Taylor 
series at this point. This is a qualitative result: we know the function is
equivalent to a polynomial and to give an upper bound on the degree of the 
polynomial is a separate question. In $1972$ Arnold (cf. \cite{Arnold}) used a 
clever 
``Lie algebraic'' trick, which together with the result by Samoilenko gives 
a very efficient criterion (Lemma 3.2 in \cite{Arnold}) 
to decide that a function is equivalent to a given polynomial.

The setup for this paper goes as follows. For every natural number $m\ge 2$
we consider the class of real-valued smooth functions defined on 
open neighbourhoods of the point $(0,0)$ in $\mathbb{R}^2$ with the leading term of the Taylor power series at $(0,0)$ being equal to $Re(x+iy)^m$. We denote this class by $\Lambda_m$. At first we can hope that $Re(x+iy)^m$ be the normal form for the class $\Lambda_m$. Unfortunately, as we will see later,
it is not true for large $m$. This motivates us to consider smaller classes of functions. Let $n$ be a natural number greater or equal to $m$. 
The class $\Lambda_m^n$ is defined to consist of those functions $f$ from 
$\Lambda_m$, for which the Taylor power series for $f-Re(x+iy)^m$ at $(0,0)$ start from the order $n+1$ or higher. These classes form a nested sequence as $n$ in increases, with the largest class $\Lambda_m^m$ being equal to $\Lambda_m$. In other words,
a member of the class $\Lambda_m^n$ is required to have $n-m$ zeros after the 
leading term $Re(x+iy)^m$ in the Taylor expansion at $(0,0)$. The following theorem
is the main result of this paper.
\begin{teorema} For any $m\ge 2$ we have the following.
\begin{itemize}
\item[(1)] The function $Re(x+iy)^m$ is a normal form for the class 
$\Lambda_m^{s(m)}$ with $s(m):=max(m,2m-4)$. 
\item[(2)] The class $\Lambda_m^{s(m)}$ is the largest in the sequence 
$\{\Lambda_m^n\}_{n\ge m}$ that admits a normal form: for $m\ge 5$ the function 
$$f_{\star}:=Re(x+iy)^m+C(x^2+y^2)^{m-2}\in \Lambda_m^{2m-5}\setminus \Lambda_m^{2m-4},\,\,\,\, C\ne 0$$ 
is not equivalent to $Re(x+iy)^m$ at $(0,0)$.
\end{itemize}
\label{conj}
\end{teorema}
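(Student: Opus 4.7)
My plan is to use the Jacobian ideal $J(F):=(F_x,F_y)$ of $F:=Re\,(x+iy)^m$, which in the complex coordinates $z=x+iy$, $\bar z=x-iy$ becomes $(z^{m-1},\bar z^{m-1})$. For \textbf{(1)} I apply the Arnold/Samoilenko machinery recalled in the introduction: it suffices to check $\mathfrak{m}^{2m-3}\subset\mathfrak{m}\cdot J(F)$, and a direct monomial calculation shows that any $z^i\bar z^j$ with $i+j\ge 2m-3$ can be written as $(z^a\bar z^b)\cdot z^{m-1}$ or $(z^a\bar z^b)\cdot\bar z^{m-1}$ with $a+b\ge 1$. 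This gives the $(2m-4)$-determinacy of $F$ and hence the equivalence of every $f\in\Lambda_m^{2m-4}$ to $F$; the low cases $m=2,3,4$ where $s(m)=m$ reduce to the Morse lemma and the classical $3$- and $4$-determinacy of $Re\,z^3$ and $Re\,z^4$.

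For \textbf{(2)} my plan is a direct contradiction at the level of formal Taylor series. Suppose a diffeomorphism $\phi$ fixing $0$ satisfies $\phi^*f_\star=F$. The $m$-jet of this equation is $Re(L_\mathbb{C}(z,\bar z))^m=Re\,z^m$ where $L:=d\phi|_0$, and an elementary argument forces $L_\mathbb{C}=\zeta z$ or $\zeta\bar z$ with $\zeta^m=1$; any such $L$ is a Euclidean isometry, so it preserves $x^2+y^2$ and hence $f_\star$ itself. Replacing $\phi$ by $L^{-1}\!\circ\phi$ I may assume $d\phi|_0=\text{id}$ and write $\phi=\text{id}+V$ with $V\in\mathfrak{m}^2$ and $\tilde V:=V_1+iV_2$. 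The binomial theorem gives
\[
\phi^*f_\star-F\;=\;\sum_{j=1}^m\binom{m}{j}Re\bigl(z^{m-j}\tilde V^{\,j}\bigr)\;+\;C(z\bar z)^{m-2}\;+\;O(\mathfrak{m}^{2m-3}),
\]
and I plan to examine the homogeneous components of the equation $\phi^*f_\star=F$ in degrees $m+1,m+2,\dots,2m-4$.

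The key observation is that once $V_2=\dots=V_{l-1}=0$, so $\tilde V\in\mathfrak{m}^l$, the summand $Re(z^{m-j}\tilde V^{\,j})$ lies in $\mathfrak{m}^{m+j(l-1)}$; at degree $k$ with $l=k-m+1$ only the term $j=1$ survives, and for $k<2m-4$ the perturbation $C(z\bar z)^{m-2}$ contributes nothing. I thus plan to prove by induction on $k\in\{m+1,\dots,2m-5\}$ that $\tilde V_{k-m+1}=0$: the degree-$k$ equation reduces to $Re\bigl(z^{m-1}\tilde V_{k-m+1}\bigr)=0$, and since every monomial of $z^{m-1}\tilde V_{k-m+1}$ has $z$-degree $\ge m-1$ while every monomial of its complex conjugate has $\bar z$-degree $\ge m-1$, and these two sets are disjoint for $k\le 2m-5$, the equation forces $\tilde V_{k-m+1}=0$. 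At the critical degree $k=2m-4$ the same collapse yields
\[
m\,Re\bigl(z^{m-1}\tilde V_{m-3}\bigr)\;=\;-C(z\bar z)^{m-2},
\]
and the same monomial-disjointness argument shows the left-hand side never contains $z^{m-2}\bar z^{m-2}$, whereas the right-hand side does with coefficient $-C\ne 0$, producing the required contradiction.

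The hard part will be the careful degree bookkeeping that justifies the collapse to the single $j=1$ summand at every stage of the induction, together with the smooth-to-formal reduction (handled by Samoilenko as in (1)). Once these are in place, the final monomial-degree argument makes transparent why the cut-off $s(m)=2m-4$ in (1) is sharp: $(z\bar z)^{m-2}$ represents the unique class in $\mathfrak{m}^{2m-4}/\mathfrak{m}^{2m-3}$ not lying in $\mathfrak{m}\cdot J(F)$.
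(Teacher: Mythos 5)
Your argument is correct, but it follows precisely the ``short'' route that the paper is deliberately written to avoid, so the two proofs are genuinely different. For part (1) you invoke the Arnold/Tougeron finite-determinacy criterion via the Jacobian-ideal inclusion $\mathfrak m^{2m-3}\subset\mathfrak m\, J(F)$ (in fact your exponents give $a+b\ge m-2\ge 2$ for $m\ge 4$, so even the stronger Tougeron hypothesis $\mathfrak m^{2m-3}\subset\mathfrak m^2 J(F)$ holds); the paper's introduction explicitly concedes that this settles (1) ``very shortly,'' but stresses that it ultimately rests on Samoilenko's Nash--Moser-type hard analysis, which is exactly what the paper sets out to circumvent. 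In its place the paper runs the chain: equivalence to $Re(x+iy)^m$ $\iff$ harmonicity for some Riemannian metric (integrability of almost complex structures / isothermal coordinates, Proposition~\ref{harmreform}), then a power-series argument on the metric governed by the operator $\Theta_k$ (whose maximal rank, Proposition~\ref{maxrank}, is the key algebraic lemma), then Mirkil's realization theorem, then a hands-on PDE correction passing from exponentially small error to exact harmonicity (Propositions~\ref{approx} and~\ref{final}). Your route buys brevity for a reader who takes Tougeron/Arnold as given; the paper's buys independence from hard implicit-function theorems and delivers Theorem~\ref{rigour} (the codimension-$(m-2)(m-3)-2$ statement) essentially for free, which the determinacy language does not produce. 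For part (2) your direct formal expansion of $\phi^{*}f_{\star}-F$ with the monomial-disjointness induction is correct and self-contained (the smooth-to-formal reduction here is trivial since a smooth equivalence induces a formal one --- no Samoilenko needed, contrary to your parenthetical); it is in effect the explicit computational shadow of the paper's Remark~\ref{missed}: your observation that $(z\bar z)^{m-2}$ is the unique class in $\mathfrak m^{2m-4}/\mathfrak m^{2m-3}$ outside $\mathfrak m J(F)$ corresponds one-to-one with the paper's $(x^2+y^2)^{m-3}\notin\operatorname{Im}\Theta_{m-4}$. Both are valid; the paper chooses to make (1) and (2) fall out of the single criterion of Theorem~\ref{central}, whereas your proof handles them by two unrelated classical devices.
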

For $m=2,3,4$ we have that $s(m)=m$ and part (2) of Theorem \ref{conj} is 
empty. If $m\ge 5$ $(s(m)=2m-4)$, then should any class $\Lambda_m^n$ 
larger than $\Lambda_m^{2m-4}$ have a normal
form, all the functions from $\Lambda_m^n$ must be equivalent to each other at $(0,0)$. Therefore, showing that the above function $f_{\star}\in \Lambda_m^{2m-5}$
is not equivalent to $Re(x+iy)^m$ does, indeed, give us part (2) of Theorem \ref{conj}.

Part (1) of Theorem \ref{conj} can be (very shortly) proved using Arnold's criterion from \cite{Arnold}. If fact, the above cited work of Arnold contains
the necessary argument for the case $m=3$, see Lemma 5.3 in \cite{Arnold}.
The goal of the paper is to give an independent proof of Theorem \ref{conj} based on completely different ideas. The historical point of the new approach is that we are not using any of the hard analysis of Nash and Moser. Only classical analysis of 19-th century is used: integrability of almost complex structures in dimension $2$ essentially going back to Gauss and a statement that every formal Taylor power series at a point is realized by a certain smooth function defined locally around this point, 
(cf. \cite{Mir}). 

In addition to part (1), our approach will give us part (2) of Theorem \ref{conj} and allow to make the following not very precisely formulated statement rigorous.
\begin{teorema} 
Functions from $\Lambda_m$ equivalent to $Re(x+iy)^m$ form a submanifold of codimension
$(m-2)(m-3)-2$ in $\Lambda_m$.
\label{rigour}
\end{teorema}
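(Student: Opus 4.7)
The plan is to realise the orbit $\mathcal{O}:=\{f\in\Lambda_m:f\sim Re(x+iy)^m\}$ as the zero-set, in a neighbourhood of $Re(x+iy)^m$ inside $\Lambda_m$, of a smooth submersion onto a finite-dimensional space of dimension $(m-2)(m-3)-2$. The key input from part (1) of Theorem \ref{conj} is that the class $\Lambda_m^{s(m)}$ already sits inside $\mathcal{O}$; consequently, the ``obstruction'' to lying in $\mathcal{O}$ can only live in the finite-dimensional jet space
\[
V:=\bigoplus_{d=m+1}^{2m-4}H_d,
\]
where $H_d$ is the space of homogeneous real polynomials of degree $d$ in $x,y$. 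Thus the orbit appears locally as the preimage in $\Lambda_m$ of a finite-codimensional submanifold of $V$.

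First I would choose a \emph{slice} $S=Re(x+iy)^m+\mathcal{H}$, where $\mathcal{H}\subset V$ is an algebraic complement, degree by degree, to the image of the infinitesimal right action $(X_1,X_2)\mapsto X_1\,\partial_x Re(z^m)+X_2\,\partial_y Re(z^m)$ with $X_1,X_2$ vanishing to order two at the origin (the vanishing order is forced by the requirement that the leading $m$-jet stay equal to $Re(z^m)$, the dihedral stabiliser being discrete). The determination of $\dim\mathcal{H}$ reduces to a cokernel calculation on the graded module generated by $\partial_xRe(z^m)$ and $\partial_yRe(z^m)$, and this is where the specific number $(m-2)(m-3)-2$ has to appear. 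Next, I would produce for each $f\in\Lambda_m$ near $Re(z^m)$ a smoothly-$f$-dependent pair $(\phi_f,s_f)$ in $G\times S$ with $f=s_f\circ\phi_f$, mimicking the constructive proof of Theorem \ref{conj}(1): at each degree $d\in\{m+1,\dots,2m-4\}$ one eliminates the ``removable'' part of $f-Re(z^m)$ by adjusting the $(d-m+1)$-jet of $\phi$ (a finite-dimensional linear algebra step) and records the non-removable residue as a coordinate on $\mathcal{H}$. The two tools that the paper singles out, the integrability of almost complex structures in dimension two (which straightens the $m$ branches of $\{f=0\}$) and Borel's theorem (which realises the resulting formal power series as an honest smooth diffeomorphism), are exactly what promote this formal iteration to a genuine smooth $\phi_f$ and $s_f$.

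Once the normal-form map $f\mapsto s_f$ is in place, the orbit is cut out locally by $\{f:s_f=Re(z^m)\}$, and by construction its differential at $Re(z^m)$ is the linear projection of $T_{Re(z^m)}\Lambda_m$ onto $\mathcal{H}$, which is surjective. Because $\mathcal{H}$ is finite-dimensional, a standard implicit function theorem applies without any Nash--Moser machinery and yields the asserted submanifold structure of codimension $\dim\mathcal{H}=(m-2)(m-3)-2$. The main obstacle will be the parameter-dependent version of the constructive step of Theorem \ref{conj}(1): one has to verify that the straightening of $\{f=0\}$ by an almost complex structure and the Borel extension of the sequence of formal corrections can both be arranged to depend smoothly on the parameter $f$, with uniform control of derivatives. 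Because each elementary step of the construction is an explicit finite-dimensional operation with coefficients smoothly dependent on $f$, this parameter dependence reduces to choosing a functorial (parameter-free) Borel extension operator; it is precisely here that the paper's integrability-plus-Borel substitute for Nash--Moser pays off, since the convergence issue that would otherwise call for hard analysis is replaced by an a priori geometric construction.
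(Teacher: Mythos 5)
Your approach is genuinely different from the paper's. The paper (a) replaces ``$f\sim Re(z^m)$'' by ``there exists a metric $g$ with $\triangle_g f=0$'' via isothermal coordinates (Proposition \ref{harmreform}); (b) inserts a formal power series for $g$ into $\triangle_g f=0$ and isolates a finite-dimensional nonlinear obstruction map $\Theta^{\star}\circ\phi$ on the jet $h=[f]_{m+1\le 2m-4}$ (Proposition \ref{endpsr}, Theorem \ref{central}); (c) proves that this composite is a submersion (Proposition \ref{reg}, Lemma \ref{submers}), so its zero locus $E$ is a submanifold, and then reads off the codimension as the dimension of the target of $\Theta^{\star}\circ\phi$. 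What you propose is instead the classical Mather--Arnold slice construction: compute the tangent space to the right-equivalence orbit via the Jacobian-ideal action $(X_1,X_2)\mapsto X_1 f_{0x}+X_2 f_{0y}$, take an algebraic complement $\mathcal H$ in the jet space $V$, and build a parameter-dependent normal-form map $f\mapsto s_f$. That is precisely the Arnold-style short cut that the paper announces in the introduction and deliberately does \emph{not} take, so you are not reconstructing the paper's argument but rather its competitor. Note also that your use of integrability (straightening the branches of $\{f=0\}$) is not the same as the paper's use (isothermal coordinates for a metric making $f$ harmonic); and the paper applies the Borel/Mirkil theorem only once, for a single fixed $f$, whereas you would need a parameter-dependent version.

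There is a concrete quantitative gap in the proposal that you should resolve before anything else. You state that the cokernel of the infinitesimal action, taken degree by degree inside $V=\bigoplus_{d=m+1}^{2m-4}H_d$, ``is where the specific number $(m-2)(m-3)-2$ has to appear'' --- but you do not carry out this calculation, and if you do, you do not get that number. Homogeneous $(X_1,X_2)\in H_j\times H_j$ contributes in degree $j+m-1$, so only $2\le j\le m-3$ is relevant. Since $f_{0x}$ and $f_{0y}$ are coprime, the Koszul relation forces injectivity of $(X_1,X_2)\mapsto X_1 f_{0x}+X_2 f_{0y}$ for these $j$, so the cokernel in degree $j+m-1$ has dimension $(j+m)-2(j+1)=m-j-2$, and summing over $2\le j\le m-3$ gives
\[
\sum_{j=2}^{m-3}(m-j-2)=\frac{(m-4)(m-3)}{2},
\]
which is strictly smaller than $(m-2)(m-3)-2$ for every $m\ge 5$ (for $m=5$ it is $1$ versus $4$). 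Concretely, for $m=5$ the image of $H_2\times H_2\to H_6$ is six-dimensional, missing only the line $\mathbb R\,(x^2+y^2)^3$, whereas the claimed codimension is $4$. So either your slice picture is missing a contribution to the codimension that you have not identified, or the number stated in Theorem \ref{rigour} (and in the paper's Lemma \ref{submers}, where the codimension is read off as the dimension of the target of $\Theta^{\star}$, namely $\sum_{k=1}^{m-4}\dim(\mathbb R_k[x,y]\times\mathbb R_k[x,y])$, rather than as the dimension of the cokernel $\sum_{k=1}^{m-4}\dim(\mathrm{Im}\,\Theta_k)^{\perp}$) needs to be re-examined. Either way, you cannot simply assert that the cokernel dimension is $(m-2)(m-3)-2$; you must actually compute it, and the computation does not support that claim.
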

Very briefly, some hidden $S^1$-symmetries of the problem --- $Re(x+iy)^m$ solves
the homogeneous Laplace equation for the flat metric on $\mathbb{R}^2$ ---
and some luck with analysis allow us to describe the set of functions from $\Lambda_m$
equivalent to $Re(x+iy)^m$ as a solution set to a certain nonlinear equation
in a finite dimensional space. Theorem \ref{conj} follows at once and 
Theorem \ref{rigour} gets a rigorous meaning without talking about Fr\'echet or Banach manifolds.

\subsection{Sketch of the proof of Theorem \ref{conj}.} 
\label{scet}  
For a function $f\in \Lambda_m$ to be equivalent to $Re(x+iy)^m$ at $(0,0)$
is the same that to be harmonic with respect to some metric $g$ defined locally 
around $(0,0)$ (Section \ref{difeqharm}). We write out the equation 
\begin{equation}
\triangle_gf=0, 
\label{thmain}
\end{equation}
where $\triangle_g$ is the Laplace operator for the metric $g$,
in coordinates $(x,y)$ and view it as a singular 
nonlinear first order partial differential equation for $g$. We do a power series argument, namely we insert
the formal Taylor power series $g_0+g_1+...g_k+...$ for $g$ in 
\eqref{thmain} and see what 
relations on $g_k$ we necessarily have, should the desired metric $g$ exist. 
We arrive at a system of linear equations 
for every degree $k$ (Subsection \ref{PSA1}). 
For degrees starting from $m-3$ (if $m\le 4$,
then for all degrees) the linear system is always solvable (Section \ref{KeyAlg}) 
Only finitely many Taylor power coefficients of $f$ at $(0,0)$
enter in coefficients of the linear systems for low (the only ``problematic'') degrees. This way the success of our power series argument
is reduced to the question whether or not certain nonlinear equation
(Equation \eqref{nonlineq}) on first several
Taylor power coefficients of $f$ is satisfied (Subsection \ref{PSA2}). 
If the power series argument is successful, then we can construct a Riemannian metric $g$ such that $\triangle_gf$ is exponentially small at $(0,0)$
(Section \ref{apprsol}).
If this exponential smallness is the case, then we can modify the Riemannian
metric $g$ in an exponentially small faishon to achieve harmonicity of $f$ with respect
to $g$ in some open neighbourhood around $(0,0)\in \mathbb{R}^2$ (Section \ref{technanal}).
After all, we get that $f$ is harmonic with respect to some $g$ if and only if 
Equation \eqref{nonlineq} is satisfied. 
This is the content of Theorem \ref{central} in Section \ref{peack}.
The rest is simply extracting consequences of Theorem \ref{central}. We prove Theorem \ref{conj} by observing that for any $f\in \Lambda_m^{s(m)}$ Equation \ref{nonlineq}
is trivially satisfied and for $f_{\star}$ $(m\ge 5)$
Equation \ref{nonlineq} is not satisfied. Theorem \ref{rigour} is treated by observing that Equation \ref{nonlineq} defines a submanifold in, roughly speaking, 
the space of Taylor power series truncated at the order $2m-4$ (Theorem \ref{codim}, Section \ref{peack}). 
  
\section{Preliminaries and notation.}
In this section we recall some basic facts and fix notational conventions.
\subsection{The Laplace operator on $\mathbb{R}^2$.}

Let the Riemannian metric $g$ on an open subset $U$ of $\mathbb{R}^2$ 
be defined by the matrix $\{g_{ij}\}_{i,j=1,2}$ 
in the standard coordinates $(x,y)$, that is 
$g_{11}=g(\partial_x,\partial_x), g_{12}=g(\partial_x,\partial_y), 
g_{22}=g(\partial_y,\partial_y)$. The Riemannian metric $g$ on 
$TU$ induces the one on $T^{\star}U$. We denote the 
induced Riemannian metric by the same letter $g$ and consider 
$g^{11}=g(dx,dx), g^{12}=g(dx,dy), g^{22}=g(dy,dy)$. 
It is easily to see that the matrix 
$\{g^{ij}\}^{i,j=1,2}$ is the inverse to the matrix
$\{g_{ij}\}_{i,j=1,2}$. The Hodge-star operator $\star_g$ for the metric $g$ on 
$\Lambda^1T^{\star}U$ is defined as the rotation on $90$ degrees counterclockwise in the fibre over each point. Since rescaling of the metric does not
affect the property of a basis to be orthonormal, the Hodge-star operator does not change if we rescale the metric conformally. It will be a standing 
convention throughout the paper to fix the rescaling 
in such a way that $\det\{g_{ij}\}_{i,j=1,2}=\det\{g^{ij}\}^{i,j=1,2}=1$.   
It is an easy exercise that with the above convention about the determinant, the formulas for the Hodge-star operator 
are $$\star_{g}dx=-g^{12}dx+g^{11}dy$$ and  
$$\star_{g}dy=-g^{22}dx+g^{12}dy,$$ 
so it brings $adx+bdy$ to $(-g^{12}a-g^{22}b)dx+(g^{11}a+g^{12}b)dy,$
i.e. in standard coordinates $(dx,dy)$
the Hodge-star operator is given by the following matrix:
$\left(\begin{array}{lr} 
                            -g^{12}&-g^{22}\\
                             g^{11}&g^{12} \\
                                       \end{array}\right)$.
We define the Laplace operator 
$$\triangle_g:C^{\infty}(U)\longrightarrow \Omega^2(U)$$ 
for the metric $g$ as follows. For a smooth real valued function $f$ on $U$ we set:
$$\triangle_gf:=d\star_gdf.$$
Using the coordinate description of the Hodge-star above it is easy to write a formula
for the Laplacian.
$$\triangle_gf=[(g^{12}f_x+g^{22}f_y)_y+(g^{11}f_x+g^{12}f_y)_x]dx\wedge dy.$$

We make some remarks about notation. Most of the time we work in standard coordinates
$(x,y)$. When convenient, in places were no confusion is possible, we will identify
linear operators (e.g. Hodge star $\star_g$) and bilinear forms (e.g. Riemannian metric $g$) on $\Omega^1(U)$ with the matrices that represent those operators and forms in the basis $(dx,dy)$. We write ``$Id$'' for the identity matrix. We also 
identify $\Omega^{2}(U)$ and $C^{\infty}(U)$ by means of the distinguished volume form
$dx\wedge dy$. This allows us to consider the Laplace operator as taking functions to
functions.   

\subsection{Some notation.}
The space of homogeneous polynomials in $(x,y)$ of degree $n$ with values in 
$\mathbb{K}\in \{\mathbb{R},\mathbb{C}\}$
will be denoted by 
$\mathbb{K}_n[x,y]$; the space of polynomials of degrees not greater than $n$ --- $\mathbb{K}_{\le n}[x,y]$;
of degrees not less than $n$ --- $\mathbb{K}_{n\le }[x,y]$;
of degrees from $n_1$ to $n_2$ --- $\mathbb{K}_{n_1\le n_2}[x,y]$.
Let $f$ be a smooth real valued function defined locally around $(0,0)\in \mathbb{R}^2$. We consider the infinite Taylor power series of $f$ at $(0,0)$  
and different of it. The $n$-th order term of the series
which lives in $\mathbb{R}_n[x,y]$ will be denoted by $[f]_n$. The following notation
is self explanatory: $$[f]_{n\le}\in \mathbb{R}_{n\le }[x,y],\,\,\, 
[f]_{\le n}\in \mathbb{R}_{\le n}[x,y],\,\,\, 
[f]_{n_1\le n_2}\in \mathbb{R}_{n_1\le n_2}[x,y].$$ If $f\in \Lambda_m$,
then $[f]_m=Re(x+iy)^m$. In this case we will write $f_0$ for $[f]_m$. This will
save some typing when writing partial derivatives of $f_0$ as $f_{0x}$ and 
$f_{0y}$. 

\subsection{Homogeneous polynomials.} 
\label{homog}

In this subsection we recall some elementary facts about homogeneous polynomials
in two variables.  
Let $P$ be a homogeneous 
polynomial in $(x,y)$ of the $n$-th degree ($\deg P=n$). Then there is a positive 
constant $C$ such that $$|P(x,y)|\le C (x^2+y^2)^{\frac{n}{2}}$$
for all $(x,y)\in \mathbb{R}^2$ . The zero set
$$KerP=\{(x,y)\in \mathbb{R}^2| P(x,y)=0\}$$ of the polynomial $P$ is
either a 1-point set $\{(0,0)\}$ or a finite union of lines through the 
origin (1-dimensional linear subspaces)
(we leave off the trivial case of the zero polynomial). If 
$KerP=\{(0,0)\}$, then there are positive constants $c,C$ such that
\begin{equation}
c(x^2+y^2)^{\frac{n}{2}}\le |P(x,y)| \le C(x^2+y^2)^{\frac{n}{2}}.
\label{poly1}
\end{equation}
Assume now that $KerP=\cup_{i\in I}l^1_i$, where
$l^1_i\subset \mathbb{R}^2$ 
is a linear subspace of dimension $1$ and $I$ is finite.
We take a small positive $\delta$ and set 
$$Cone_i^{\delta}(P)=\{(\xi,\eta)\in \mathbb{R}^2\setminus \{(0,0)\}| 
\frac{dist((\xi,\eta),l^1_i)}{dist((\xi,\eta),(0,0))}< \delta\} $$
and $$\Omega^{\delta}(P)=\mathbb{R}^2\setminus \cup_{i\in I}Cone_i.$$
Then there are positive constants $c_{\delta}$ and $C_{\delta}$
such that 
\begin{equation}
c_{\delta}(x^2+y^2)^{\frac{n}{2}}\le |P|_{\Omega^{\delta}(P)}(x,y)| 
\le C_{\delta}(x^2+y^2)^{\frac{n}{2}}.
\label{poly2}
\end{equation}

\subsection{Exponentially small functions.} 
\label{O(exp)}

In this section we recall some facts about the ideal of functions 
exponentially small near a point.
We denote the set of infinitely differentiable 
functions defined in some open neighbourhood of $(0,0)\in \mathbb{R}^2$ 
with vanishing Taylor series at $(0,0)\in \mathbb{R}^2$ by $O(exp)$. This 
is an ideal in the ring of all (locally defined) smooth functions, i.e. a 
multiplication of an element from $O(exp)$ with any smooth function
gives us again a function of the class $O(exp)$. This class respects the 
operation of taking derivatives, i.e. partial derivatives of all orders taken
from an $O(exp)$-function belong to $O(exp)$ (algebraically, $O(exp)$ is 
a differential ideal). The following criterion is a standard way to check
that a given function belongs to the class $O(exp)$. 
\begin{lemma}
Let the function $\phi$
be smooth in a punctured neighbourhood of $(0,0)$. If $\phi$ decays at 
$(0,0)$ together with all its derivatives faster than any polynomial,
then the continuation of $\phi$ across the origin by $0$ belongs to
the class $O(exp)$. 
\end{lemma}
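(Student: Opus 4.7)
The plan is to define $\tilde\phi$ as the extension of $\phi$ by $\tilde\phi(0,0)=0$ and show by induction on the total order $k$ of derivatives that $\tilde\phi$ is $C^k$ on a full neighbourhood of $(0,0)$ with every partial derivative of order $\le k$ vanishing at $(0,0)$ and coinciding with the corresponding pointwise partial derivative of $\phi$ on the punctured neighbourhood. Once this is established for every $k$, the function $\tilde\phi$ is $C^\infty$ with vanishing Taylor series at $(0,0)$, which is the definition of $O(exp)$.

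The base case $k=0$ is immediate: the rapid decay assumption in particular implies $\phi(x,y)\to 0$ as $(x,y)\to(0,0)$, so $\tilde\phi$ is continuous at the origin. For the inductive step, suppose the claim holds up to order $k-1$, and let $\alpha$ be a multi-index with $|\alpha|=k-1$. I would compute $\partial_j(\partial^\alpha \tilde\phi)$ at $(0,0)$ from the definition of a partial derivative along the $j$-th coordinate axis: since $\partial^\alpha \tilde\phi(0,0)=0$ and $\partial^\alpha\tilde\phi$ agrees with $\partial^\alpha\phi$ off the origin by induction, the difference quotient is $\partial^\alpha\phi(he_j)/h$. The hypothesis that $\partial^\alpha\phi$ decays faster than any polynomial gives in particular $|\partial^\alpha\phi(he_j)|\le C|h|^2$ for small $h$, so the limit exists and equals zero. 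Thus $\partial_j \partial^\alpha \tilde\phi(0,0)$ exists and equals $0$.

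To upgrade existence of the partial derivative at the origin to continuity (so that $\tilde\phi$ is genuinely $C^k$), I would note that on the punctured neighbourhood the classical derivative $\partial_j\partial^\alpha\tilde\phi$ coincides with $\partial_j\partial^\alpha\phi$, which by hypothesis tends to $0$ as $(x,y)\to(0,0)$. Combined with the value $0$ at the origin, this gives continuity of every order-$k$ partial derivative of $\tilde\phi$ on a full neighbourhood of the origin; standard calculus then upgrades pointwise existence of all order-$k$ partial derivatives plus continuity to $\tilde\phi\in C^k$.

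Iterating this over all $k\ge 0$ shows $\tilde\phi\in C^\infty$ near $(0,0)$ with $\partial^\alpha \tilde\phi(0,0)=0$ for every multi-index $\alpha$, hence the Taylor series of $\tilde\phi$ at the origin is identically zero and $\tilde\phi\in O(exp)$. There is no genuinely hard step; the only mild point requiring care is bookkeeping in the induction, namely that one must simultaneously propagate three things — existence and vanishing of the derivative at the origin, coincidence with the pointwise derivative of $\phi$ off the origin, and continuity across the origin — so that the definition of the next derivative via difference quotients is legitimate at the following level.
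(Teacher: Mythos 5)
Your proof is correct and takes essentially the same inductive approach as the paper's: computing difference quotients along coordinate axes and using the rapid-decay hypothesis to show all partial derivatives of the extension at the origin exist, vanish, and are continuous. You are somewhat more explicit than the paper about the $C^k$ bookkeeping (distinguishing pointwise existence of an order-$k$ partial from genuine $C^k$-smoothness), but the underlying argument is identical.
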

\begin{proof}
It suffices to prove that all partial derivatives of $\phi$ at $(0,0)$ exist
and vanish. Take for instance the first partial derivative with respect to
$x$, namely $$\partial_x\phi_{(0,0)}:=
\lim_{h\to 0}\frac{\phi(h,0)-\phi(0,0)}{h}=
\lim_{h\to 0}\frac{\phi(h,0)}{h}.$$ The last limit exists and vanishes, since
$\phi$ decays at $(0,0)$ faster than any polynomial. Similarly, 
$\partial_y\phi_{(0,0)}=0$. For the partial derivatives of the second order
similar procedure works. It uses that the partial derivatives of $\phi$
of the first order decay faster than any polynomial. Proceeding inductively
one shows that all partial derivatives of $\phi$ at $(0,0)$ exist and 
vanish. \end{proof}
This has an immediate application. 
\begin{lemma}
Let the function 
$\phi$ be of the class $O(exp)$ and $\sigma$ be a (locally defined) smooth 
function which has zero of finite order at $(0,0)$ and does not vanish in 
a punctured neighbourhood of $(0,0)$. Then the ratio $\frac{\phi}{\sigma}$ is
well defined locally around the origin and belongs to the class $O(exp)$.
\end{lemma}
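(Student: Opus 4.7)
The plan is to reduce to the preceding lemma: since $\sigma$ does not vanish in a punctured neighbourhood of $(0,0)$, the ratio $\phi/\sigma$ is well defined and smooth there, and it suffices to verify that $\phi/\sigma$ decays at $(0,0)$ together with all its partial derivatives faster than any polynomial; continuation by zero then yields an element of $O(exp)$.

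The crucial preliminary step, where the finite-order hypothesis on $\sigma$ is used in full strength, is a Lojasiewicz-type polynomial lower bound: there exist positive constants $c,\rho$ and a positive integer $N$ with
$$|\sigma(x,y)|\ge c(x^2+y^2)^{N/2}\qquad\text{for}\qquad 0<\sqrt{x^2+y^2}<\rho.$$
If the leading nonzero homogeneous Taylor term $P_N$ of $\sigma$ at $(0,0)$ has trivial kernel, this is immediate from \eqref{poly1} combined with the Taylor remainder estimate $|\sigma-P_N|\le Cr^{N+1}$, which gives $|\sigma|\ge |P_N|-|\sigma-P_N|\ge \tfrac{c}{2}r^N$ for $r$ small. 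When $P_N$ has real zeros along lines through the origin, one uses \eqref{poly2} on the complement of thin cones around those lines, and then extends the estimate inside the cones by exploiting the non-vanishing of $\sigma$ in the punctured neighbourhood (possibly at the cost of a larger exponent $N$).

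Granted this bound, the rest is a Leibniz (quotient) rule bookkeeping. Every mixed partial $\partial^{\alpha}(\phi/\sigma)$ is a finite sum of terms of the form
$$\frac{(\partial^{\beta}\phi)\cdot Q_{\alpha}(\partial^{\gamma}\sigma)}{\sigma^{j}},\qquad |\beta|\le|\alpha|,\ j\le|\alpha|+1,$$
where $Q_{\alpha}$ is a polynomial expression in finitely many partial derivatives of $\sigma$. By the lower bound just established and the local smoothness of $\sigma$, each such term is dominated by $C_{\alpha}(x^2+y^2)^{-jN/2}|\partial^{\beta}\phi|$ near the origin. Since $O(exp)$ is a differential ideal, $\partial^{\beta}\phi$ decays at $(0,0)$ faster than any polynomial, so multiplication by the polynomial blow-up $(x^2+y^2)^{-jN/2}$ preserves this property, and the preceding lemma finishes the argument. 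The only genuine technical obstacle is the first step, the polynomial lower bound on $|\sigma|$ when $P_N$ has nontrivial real kernel; once this estimate is in place, the remainder is mechanical.
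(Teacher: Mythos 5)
Your overall plan coincides with the paper's: expand $\partial^{\alpha}(\phi/\sigma)$ by the quotient rule so that the denominator is a power of $\sigma$, obtain a polynomial lower bound on $|\sigma|$ near the origin, and feed the resulting faster-than-polynomial decay of every derivative into the preceding lemma. The divergence is in how the lower bound on $|\sigma|$ is obtained, and that is precisely where your write-up has a gap. The paper's proof asserts that the hypothesis forces the leading homogeneous Taylor term $P$ of $\sigma$ to vanish only at the origin, so \eqref{poly1} immediately gives $|\sigma|\ge c(x^2+y^2)^{N/2}$ and there is no cone case to treat. You instead allow $P$ to vanish on some lines through the origin and propose a ``Lojasiewicz-type'' bound, using \eqref{poly2} away from the zero lines and then extending the estimate into thin cones around them ``by exploiting the non-vanishing of $\sigma$ in the punctured neighbourhood, possibly at the cost of a larger exponent $N$.''

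That extension step is not merely unjustified; it is false for smooth (as opposed to real-analytic) $\sigma$. Take $\sigma(x,y)=x^2+e^{-1/y^2}$, continued by $x^2$ across the $x$-axis. This $\sigma$ is smooth, positive off the origin, and its Taylor series at $(0,0)$ is $x^2$, so under your weaker reading it has a zero of finite order $2$. Yet along the $y$-axis $\sigma(0,y)=e^{-1/y^2}$ decays faster than every power of $y$, so no bound $|\sigma|\ge c(x^2+y^2)^{N/2}$ holds on any punctured neighbourhood, with any $N$. Indeed the lemma itself fails for this $\sigma$: with $\phi(x,y)=e^{-1/(2y^2)}\in O(exp)$ one has $\phi(0,y)/\sigma(0,y)=e^{1/(2y^2)}\to\infty$. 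So the hypothesis ``zero of finite order'' must be read in the stronger sense the paper's proof actually uses: $\sigma$ is two-sidedly comparable to $(x^2+y^2)^{N/2}$ near $(0,0)$, equivalently the leading Taylor term vanishes only at the origin. Under that reading the lower bound is an instant consequence of \eqref{poly1} and the cone case is vacuous; the paper only treats the genuinely degenerate situation, where $P$ vanishes on lines, in the remark following the lemma, and there it correspondingly claims fast decay of $\phi/\sigma$ only on $\Omega^{\delta}(P)$, not on a full neighbourhood. Once the first step is replaced by the \eqref{poly1} argument, your Leibniz bookkeeping is fine and agrees with the paper's proof.
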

\begin{proof}
Writing out a partial derivative of some order of the fraction  
$\frac{\phi}{\sigma}$ gives us a fraction whose numerator decays faster than 
any polynomial and the denominator is equal to $\sigma^n$ for some natural $n$.
The function $\sigma$ having isolated zero of finite order at $(0,0)$ implies
that its leading term $P$ in Taylor expansion at $(0,0)$ has the unique zero
at $(0,0)$. Therefore (see Inequality \eqref{poly1}) $P$
and hence $\sigma^n$ can be estimated from below
by $c(x^2+y^2)^{n/2}$ for some positive constant $c$. 
Altogether, the fraction, 
representing the partial derivative decays faster than any polynomial at 
$(0,0)$. Application of the previous lemma to $\frac{\phi}{\sigma}$
finishes the proof.\end{proof}
There is one more technical remark that we will need in future. Let 
$\phi$ be of the class $O(exp)$ and $\sigma$ be a (locally defined) smooth 
function which has a zero of finite order at $(0,0)$. 
Let the homogeneous polynomial 
$P$ be the leading term in the Taylor series of $\sigma$ at $(0,0)$. 
Assume that $KerP=\cup_{i\in I}l^1_i$, where
$l^1_i\subset \mathbb{R}^2$ 
is a linear subspace of dimension $1$ with finite $I$ and take a small positive
$\delta$ to define $\Omega^{\delta}(P)$ as in the previous section.
Then $\frac{\phi}{\sigma}|_{\Omega^{\delta}(P)}$ decays at $(0,0)$ together 
with all its derivatives faster than any polynomial 
(Inequality \eqref{poly2} is used for the proof).

These facts about
the class $O(exp)$ will be used freely later on without special references.
In calculations, by abuse of notation, we will sometimes denote an 
$O(exp)$-function by the symbol $O(exp)$. The $O(exp)$ notation for $2$-forms
defined in a neighbourhood of $(0,0)\in \mathbb{R}^2$ transfers by means
of a fixed volume form.

\section{Diffeomorphic equivalence to $Re(x+iy)^m$ and harmonicity.}
\label{difeqharm}
In this section we start approaching Theorem \ref{conj}.
The real $2$-plain $\mathbb{R}^2$ is given the standard coordinates $(x,y)$. 
\begin{predlozhenie}
Let $f$ be a function from $\Lambda_m$. 
Then the following two assertions are equivalent:\\
$1)$ The function $f$ is equivalent to $Re(x+iy)^m$ at $(0,0)$.\\
$2)$ There exists a Riemannian metric $g$ defined on some open neighbourhood of $(0,0)$
making $f$ harmonic.\\
Moreover, if $1)$ and $2)$ hold, then we can assume that in $1)$ the leading term at $(0,0)$ of the diffeomorphism which realizes the equivalence is equal to identity
and in $2)$ the Riemannian metric $g$ at $(0,0)$ is the standard Euclidean.
\label{harmreform}
\end{predlozhenie}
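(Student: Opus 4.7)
The plan is to prove both directions by combining two classical facts available in dimension two: the conformal invariance of the Hodge star (and hence of the Laplacian), and the existence of isothermal coordinates for every smooth Riemannian metric on a surface (essentially due to Gauss, as flagged in the introduction).

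For the direction $(1)\Rightarrow(2)$, assume $f=f_0\circ\phi$ with $\phi$ a local diffeomorphism fixing the origin. The linear part $L:=d\phi_{(0,0)}$ must satisfy $f_0\circ L=f_0$, and a direct inspection shows that the group of such $L$ is a finite dihedral subgroup of $O(2)$ (the symmetries of the $m$ zero-lines of $\mathrm{Re}\,(x+iy)^m$ together with rotations by $2\pi/m$). Replacing $\phi$ by $L^{-1}\circ\phi$ leaves $f=f_0\circ\phi$ unchanged and ensures $d\phi_{(0,0)}=\mathrm{Id}$. Now define $g:=\phi^{\ast}g_{\mathrm{std}}$, conformally rescaled to satisfy $\det g=1$. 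Since the Hodge star in dimension two is invariant under conformal rescaling, and naturality of $d$ and $\star$ under diffeomorphisms gives
\[
\triangle_g f=d\star_g d(f_0\circ\phi)=\phi^{\ast}(d\star_{g_{\mathrm{std}}}df_0)=\phi^{\ast}(\triangle_{g_{\mathrm{std}}}f_0)=0,
\]
we obtain harmonicity of $f$ with respect to $g$. Moreover $g(0,0)=\mathrm{Id}$ because $d\phi_{(0,0)}=\mathrm{Id}$.

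For the harder direction $(2)\Rightarrow(1)$ I would proceed in two steps. In Step 1, I invoke the Gauss theorem on isothermal coordinates to obtain a local orientation-preserving diffeomorphism $\phi$ with $\phi^{\ast}g_{\mathrm{std}}=\lambda g$ for a positive smooth $\lambda$; equivalently, $\phi$ is a holomorphic chart for the almost complex structure $J_g$, which is integrable in dimension two. Since $(d\phi_{(0,0)})^T d\phi_{(0,0)}=\lambda(0)\,\mathrm{Id}$ and $\phi$ is orientation-preserving, $d\phi_{(0,0)}$ is a similarity of $\mathbb{C}$, and post-composing $\phi$ with its inverse (a biholomorphism) I may assume $d\phi_{(0,0)}=\mathrm{Id}$. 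The function $h:=f\circ\phi^{-1}$ is then Euclidean-harmonic near the origin by conformal invariance of the Laplacian, and its Taylor expansion at $(0,0)$ starts with $f_0$. In Step 2, on a small simply connected disc I take the holomorphic function $F$ with $\mathrm{Re}\,F=h$ and $\mathrm{Im}\,F(0)=0$. From the leading-term condition on $h$, $F(z)=z^m u(z)$ with $u$ holomorphic and $u(0)=1$, so $\psi(z):=z\,u(z)^{1/m}$ is a local biholomorphism with $\psi'(0)=1$ satisfying $\psi^m=F$. Therefore $h=\mathrm{Re}(\psi^m)=f_0\circ\psi$, and
\[
f=h\circ\phi=f_0\circ(\psi\circ\phi),
\]
with $\psi\circ\phi$ a local diffeomorphism of identity linear part.

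The main obstacle is Step 1, the existence of isothermal coordinates for a smooth Riemannian metric, which amounts to solving a local Beltrami-type equation; in the $C^{\infty}$ setting this is classical, but it is the only non-elementary analytic ingredient. Everything else in the argument is soft: conformal invariance of $\star_g$ is pointwise linear algebra in a two-dimensional inner product space, the harmonic conjugate $F$ comes from the Poincar\'e lemma on a disc, and the $m$-th root extraction is a convergent power-series manipulation.
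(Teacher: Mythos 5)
Your proof is correct and follows essentially the same route as the paper's: pull back the Euclidean metric for $(1)\Rightarrow(2)$, and for $(2)\Rightarrow(1)$ use integrability of $J_g$ in two dimensions to get isothermal coordinates, conformal invariance of $\star_g$ to pass to Euclidean harmonicity, the holomorphic function $F$ with $\mathrm{Re}\,F=h$, and the $m$-th root extraction $\psi(z)=z\,u(z)^{1/m}$. One inessential inaccuracy: for $m=2$ the stabilizer of $f_0=x^2-y^2$ in $GL(2,\mathbb{R})$ is the noncompact group $O(1,1)$, not a finite dihedral subgroup of $O(2)$; but this side remark is never used, since all you need is that $f_0\circ L=f_0$ forces $f_0\circ L^{-1}=f_0$, which holds in every case.
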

\begin{proof} The implication from 1) to 2) is obvious, since the function
$Re(x+iy)^m$ is harmonic with respect to the standard (Euclidean) metric on 
$\mathbb{R}^2$. For the converse, take a Riemannian metric $g$ which makes $f$
harmonic and consider an almost complex structure $J$ 
induced by $g$ --- a rotation by $90$ degrees 
counterclockwise. We can assume a small ball $B$ around $(0,0)$
to be the common domain of definition of $f$ and $g$. 
For dimension reasons any almost complex structure on $B$
is integrable. This means that assuming the ball $B$ to be small enough we can pick
a complex coordinate $z=u+iv$ such that $J\partial_u=\partial_v$,
i.e. $\partial_v$ is obtained from $\partial_u$ by means of the rotation by
$90$ degrees (counterclockwise). This implies that $g$ looks like a multiple
of identity in the coordinate system $(u,v)$. (Existence of such a coordinate system 
(``isothermal coordinates'') essentially goes back to Gauss.)
Since for dimension reasons the Hodge-star operator, and therefore the Laplace operator on $B$ does not change if we re-scale the metric conformally, 
we have that $f$ is harmonic with respect to the metric
represented by the identity matrix
in the coordinates $(u,v)$. This means that $f$ in coordinates $(u,v)$
is a real part of some complex-valued function $F$ 
which depends holomorphically on $z=u+iv$.
The last step is to bring 
$F=a_mz^m+a_{m+1}z^{m+1}+...$, $a_j\in \mathbb{C}$
to its leading power $z^m$ by a biholomorphic change of coordinates. So finally
$f=ReF=Rez^m=Re(u+iv)^m$. Moreover, we can assume that all our coordinate 
changes preserved the origin.
This completes the proof of equivalence of $1)$ and $2)$.
The function $f$ has the same leading term at $(0,0)$ as $Re(x+iy)^m$.
From this it is easy to see that if $1)$ holds, then the diffeomorphism
which realizes the equivalence can be taken to have identity as the leading term
at $(0,0)$. This, in turn, gives us that the Riemannian metric $g$ in $2)$ can be assumed to satisfy $g_{(0,0)}=Id$.   
\end{proof}
This proposition reduces the problem of diffeomorphic equivalence of $f\in \Lambda_m$ and $Re(x+iy)^m$ to the inverse problem for the Laplace operator. We start dealing with the inverse problem in the next section. When looking for the desired 
Rieannian metric $g$ or considering a metric which solves our problem we will,
by convention, always assume that $g_{(0,0)}=Id$. This is no loss of generality by 
Proposition \ref{harmreform}.

\section{Power series argument.}
\label{Psa}

In this section we insert a formal Taylor power series for $g$ in 
$$\triangle_gf=0$$
and do a power series argument. The notation will be as follows. We write the infinite
formal Taylor power series for $g$ as 
$$g=g_0+g_1+...+g_k+... $$
(with $g_0=Id$ as we agreed).
We do everything in standard coordinates $(x,y)$, 
so $g_k$ is viewed as a symmetric $2\times 2$ 
matrix with entries in $\mathbb{R}_k[x,y]$.
The following notation is motivated by how we produce the matrix for the Hodge-star
operator out of the matrix for the metric. Given a $2\times 2$ symmetric matrix 
$$g_k=\left(\begin{array}{lr} 
                            G^{11}&G^{12}\\
                            G^{12}&G^{22} \\
                                       \end{array}\right),$$
we introduce a traceless $2\times 2$ matrix                                      
$$G_k:=\left(\begin{array}{lr} 
                            -G^{12}&-G^{22}\\
                             G^{11}&G^{12} \\
                                       \end{array}\right).$$
We set $T_k=G_0+...+G_k$. 
First, we assume that there exists a locally defined Riemannian metric $g$, such that 
$\triangle_gf$ vanishes at $(0,0)$ up to infinite order and see what relations it gives for Taylor power coefficients $g_k$ of $g$ and then we analyse these relations algebraically.
\subsection{Inductive setup.}
\label{PSA1}
\begin{predlozhenie}
Let $f\in \Lambda_m$ and let $g$ be a Riemannian metric defined locally around $(0,0)$
%with 
%$g(0,0)=Id$ 
such that 
\begin{equation}
\triangle_gf=O(exp),
\label{apprharm} 
\end{equation}
then the following assertion $\mathcal{A}_k$ holds true for all positive integers $k$:
$$(i) \,\,  [dT_kdf]_n=0,\,\, n=0,1,...,k+m-2 $$ and
$$(ii)\,\,  [\det T_k]_n=0,\,\,  n=1,...,k. $$
\label{power}
\end{predlozhenie}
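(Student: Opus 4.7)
The plan is to interpret $T_k$ as the degree-$\le k$ Taylor polynomial of the Hodge-star matrix $\star_g$ in standard coordinates, and then to extract both (i) and (ii) by straightforward degree counting from the two structural identities $\star_g$ satisfies as a consequence of the hypotheses: the asymptotic harmonicity $d(\star_g df)=\triangle_g f=O(exp)$, and the algebraic relation $\det\star_g=1$ coming from the standing normalization $\det g=1$.

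The first step will be to verify the identification $T_k=[\star_g]_{\le k}$. Comparing the coordinate formula
$$\star_g=\begin{pmatrix}-g^{12}&-g^{22}\\ g^{11}&g^{12}\end{pmatrix}$$
from the preliminaries with the recipe that produces $G_k$ from $g_k$, one sees that the defining pattern of $G_k$ is exactly the pattern of $\star_g$; combined with the normalization $\det g=1$, which lets one pass freely between Taylor coefficients of $g_{ij}$ and of $g^{ij}$, this yields $G_k=[\star_g]_k$. Granting this, I will write $\star_g=T_k+R_{k+1}$, where $R_{k+1}$ is a matrix whose entries are smooth functions vanishing at the origin to order at least $k+1$.

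For (i), since $f\in\Lambda_m$ the 1-form $df$ has lowest-order term of degree $m-1$, so the entries of $R_{k+1}\,df$ vanish to order at least $k+m$, and the 2-form $d(R_{k+1}\,df)$ (identified with a function via $dx\wedge dy$) vanishes to order at least $k+m-1$. Since $d(\star_g df)=O(exp)$ has identically vanishing Taylor series, the identity
$$[d(T_k\,df)]_n=[d(\star_g df)]_n-[d(R_{k+1}\,df)]_n=0,\qquad n\le k+m-2,$$
is exactly assertion (i).

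For (ii), expanding the determinant of a sum of two $2\times2$ matrices gives
$$\det(T_k+R_{k+1})=\det T_k+L(T_k,R_{k+1})+\det R_{k+1},$$
where $L$ is bilinear in the entries of its arguments. The last two terms have Taylor series of order $\ge k+1$, while the left-hand side equals $\det\star_g=1$; hence $\det T_k$ agrees with $1$ through order $k$, which is precisely assertion (ii). The only non-routine step is the sign-and-convention bookkeeping needed to identify $T_k$ with $[\star_g]_{\le k}$; once that is done both assertions follow by direct degree counting, and no induction on $k$ is required.
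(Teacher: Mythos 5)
Your proposal is correct and takes essentially the same route as the paper: both identify $T_k$ with the degree-$\le k$ Taylor truncation of $\star_g$ and then obtain (i) and (ii) by direct degree counting from $d(\star_g df)=O(exp)$ and $\det\star_g=1$. Your explicit verification of the identification $T_k=[\star_g]_{\le k}$ is a helpful clarification of a step the paper leaves implicit, but it is the same argument.
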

\begin{proof}
We begin with $(ii)$. Recall that we agreed to normalize $g$ such that $\det g=\det\star_{g}=1$. Fix a positive integer $k$ and a positive integer $n$ with
$n\le k$. Taylor coefficients of $\star_{g}$ of orders $k+1$ and higher do not contribute to the $n$-th Taylor coefficient of $\det\star_{g}$.
Therefore, $0=[det\star_g]_n=[det[\star_g]_k]_n=[detT_k]_n$. This gives us $(ii)$.
We use Equation \eqref{apprharm}: fix a positive integer $k$ and leave a positive integer $n$ vary. Then, $0=[d\star_gdf]_n=[dT_kdf]_n+[d[\star_g]_{\ge{k+1}}df]_n$.
For $n=0,...,k+m-2$ the second term vanishes and we get $(i)$.
\end{proof}

Now we fix some $f\in \Lambda_m$, step away from the actual metric $g$ and work with formal power series 
$g_0+g_1+...$.
Note that for $k=0$ the assertion $\mathcal{A}_k$ is always true. 
The next goal is to assume the assertion to
be true for $k-1$ and rewrite equivalently the assertion for $k$ under this assumption.
Clearly, $$dT_kdf=d(T_{k-1}+G_k)df=dT_{k-1}df+dG_kdf.$$
Consider $n=0,1,...,k+m-3=(k-1)+m-2$. 
Since the leading term of $dG_kdf$ is of order $k+m-2$ 
we have that $[dG_kdf]_n=0$. 
By our assumption $[dT_{k-1}df]_n=0$.
It means that for the above values of $n$ Equation (i) is satisfied.
It remains to look at Equation (i) for $n=k+m-2$. That is we are looking at 
$$[dT_kdf]_{k+m-2}=0$$ which is the same as
$$[dG_kdf]_{k+m-2}=-[dT_{k-1}df]_{k+m-2}.$$
Since the leading power of $f-f_0$ is greater than $m$ 
the last equation is equivalent to    
\begin{equation}
[dG_kdf_0]_{k+m-2}=-[dT_{k-1}df]_{k+m-2}.
\label{basic}
\end{equation}
%where we know the right hand side and the unknowns are $G^{11}_k$, $G^{12}_k$,
%$G^{22}_k$. 
Now we consider 
Equation (ii). Since it is automatically true for $n=1,...,k-1$ by our 
assumption, the equation will follow if 
\begin{equation}
[\det T_k]_k=0.
\label{determ}
\end{equation}
Clearly, $[\det T_k]_k=G^{11}_k+G^{22}_k+
[\det T_{k-1}]_k$, therefore 
Equation \eqref{determ} is equivalent to  
\begin{equation}
G^{22}_k=[\det T_{k-1}]_k-G^{11}_k.
\label{G22}
\end{equation}
Now we substitute this in Equation \eqref{basic}, 
obtaining the following equation:
\begin{equation}
(G^{12}_kf_{0x}-G^{11}_kf_{0y})_y+(G^{11}_kf_{0x}+G^{12}_kf_{0y})_x=\phi_kdx\wedge dy,
\label{subst}
\end{equation}
where 
\begin{equation}
\phi_k=-[dT_{k-1}df]_{k+m-2}/dx\wedge dy-([\det T_{k-1}]_kf_{0y})_y. 
\label{rhs}
\end{equation}
The right hand side $\phi_k$ of Equation \eqref{subst} depends only on the terms of the power series 
$g_0+g_1+...$ up to order $k-1$ and the left hand side involves $G^{11}_k$ and $G^{12}_k$. 
So we can look at \eqref{subst} as an equation for $(G^{11}_k,G^{12}_k)$.

Now we 
summarize our discussion of the assertion $\mathcal{A}_k$ as follows. Fix any $k\ge 1$.
Then under the assumption that $\mathcal{A}_{k-1}$ is true, we have that $\mathcal{A}_k$ is equivalent to the following: 
\begin{itemize}
 \item $\phi_k$ is such that Equation \eqref{subst} is solvable, 
 \item the pair $(G^{11}_k,G^{12}_k)$ solves Equation \eqref{subst} and
 \item $G^{22}_k$ meets Equation \eqref{G22}. 
\end{itemize}
Assume now that given $f\in \Lambda_m$
we want to construct a power series $g_0+g_1+...$ such that $\mathcal{A}_k$
holds for all $k$. We set $g_0=Id$ and this gives us that $\mathcal{A}_0$ is true
and allows to compute $\phi_1$ (using \eqref{rhs}). 
Then we look at Equation \eqref{subst} for $k=1$. If it is 
not solvable, then we stop, else we solve it to get $G^{11}_1$
and $G^{12}_1$. Then we use \eqref{G22} to get $G^{22}_1$. Having $g_1$ at hands we can
use \eqref{rhs} to compute $\phi_2$. We look at \eqref{subst} for $k=2$ and proceed
inductively to construct successively the whole infinite power series $g_0+g_1+...$ 
or to get stuck at a certain point. 
Note that the check on 
the step $k$ happens conditionally to the fact that we were successful on the step $k-1$: Equation \eqref{rhs} for the right hand side of \eqref{subst} involves $T_{k-1}$, implicitly assuming that Equation \eqref{subst} for $k-1$ was solved.
This subtlety will be discussed in more detail in the next subsection.

The following notation will prove useful when rewriting the left hand side of \eqref{subst}
in a more convenient way. Let the linear operator 
$$L:\mathbb{R}^2\longrightarrow Mat(2\times 2,\mathbb{R})$$
be defined as follows

\begin{equation}
L(a,b):=\left(\begin{array}{cc}
                                    -b&a\\
                                     a&b\\
                       \end{array}\right).
\label{complex}
\end{equation}                       
With this notation we define the linear operator                        
$$\Theta_k:\mathbb{R}_k[x,y]\times \mathbb{R}_k[x,y]\longrightarrow \mathbb{R}_{k+m-2}[x,y]$$
via $$\Theta_k(Q_1,Q_2):=d(L(Q_1,Q_2)df_0)/dx\wedge dy.$$
With this Equation \eqref{subst} rewrites as 
\begin{equation}
\Theta_k(G^{11}_k,G^{12}_k)=\phi_k.
\label{rewrsubst}
\end{equation}
As we will show in Section \ref{KeyAlg}, the operator $\Theta_k$ has maximal rank for all $k\ge 1$.
We take this for granted at the moment and look at consequences. 

Counting dimensions of
the domain and the target spaces of the operator $\Theta_k$ shows that for $k\ge m-3$,
Equation \eqref{rewrsubst} is always solvable. Thus, if we managed to 
make the steps for $k=1,...,m-4$, then we can {\it always} make the remaining steps for 
$k=m-3,m-2,...$, i.e. construct an infinite formal Taylor power series $g_0+g_1+...$
such that assertion $\mathcal{A}_k$ holds true for all $k\ge 1$. 
%This hints already (on the level
%of formal power series) that existence of a metric making $f$ harmonic depends only
%on finitely many terms of the Taylor expension of $f$ at $(0,0)$. 
We close this subsection by noting that for $k=1,....,m-4$ solvability of Equation 
\eqref{rewrsubst} crucially depends $\phi_k$. This forces us to study $\phi_k$ 
a little deeper. We make an easy remark about $\phi_k$ now and leave further discussion for the next subsection.
\begin{remark}
Fix a positive integer $N\ge m$, assume that $[f-f_0]_n=0$ for $m<n<N$. Then $\phi_k=0$ for $1\le k<N-m$ and 
$\phi_k=-\triangle_{st}([f-f_0]_N)$ for $k=N-m$ 
\label{comprhs}
\end{remark}

\subsection{The function $\phi$.}
\label{PSA2}
Assume that $m\ge 5$. Otherwise Equation \eqref{rewrsubst} is always solvable and 
the following discussion becomes empty. We consider the set $S:=\{1\le k\le m-4\}$
of those values of $k$ for which solvability of Equation \eqref{rewrsubst} is problematic. 
The first goal of this subsection 
is to make sense of $\phi_k$ regardless of the solvability of Equation \eqref{rewrsubst}
for $k-1$.  We view Equation \eqref{rewrsubst}
as a problem of finding the preimage of the element 
$\phi_k\in \mathbb{R}_{k+m-2}[x,y]$ under the linear map
$\Theta_k$. We introduce an auxiliary Euclidean structure on the target space $\mathbb{R}_{k+m-2}$ of $\Theta_k$
and let $P_{\Theta_k}$ be the orthogonal projection to the image of $\Theta_k$.
We replace \eqref{rewrsubst} by the following Equation 
$$\Theta_k(G^{11}_k,G^{12}_k)=P_{\Theta_k}\phi_k$$
and set 
\begin{equation}
(G^{11}_k,G^{12}_k)=\Theta_k^{-1}P_{\Theta_k}\phi_{k-1}
\label{indG}
\end{equation}
and then use it to build 
\begin{equation}
T_k=T_{k-1}+G_k.
\label{indT} 
\end{equation}
And then we use this 
$T_k$ to write the right hand side for Equation \eqref{rewrsubst} for $k+1$ by Formula
\eqref{rhs} above. This way we get a sequence $\phi_1,...,\phi_{m-4}$.
%where $\phi_0$ enters as a right hand side in $(subst)_1$ and $\phi_{s(m)-1}$
%as a right hand side in $(subst)_k$. 
At this point we introduce an auxiliary Euclidean
structure on $\mathbb{R}_k[x,y]\times \mathbb{R}_k[x,y]$. Together with the one on
$\mathbb{R}_{k+m-2}$, this allows us to consider the adjoint operator to $\Theta_k$:
$$\Theta^{\star}_k:\mathbb{R}_{k+m-2}[x,y]\longrightarrow \mathbb{R}_k[x,y]\times\mathbb{R}_k[x,y]$$
and solvability of the set of Equations \eqref{rewrsubst}, $k\in S$ reads as 
$$\Theta^{\star}_k\phi_k=0,\,\,\, k\in S.$$
Now we let $f\in \Lambda_m$ vary, so
the sequence $\{\phi_k\}_{k\in S}$ becomes a function of $(m-4)$ Taylor coefficients of $f$ at $(0,0)$ --- the ones of orders from $m+1$ to $m+(m-4)$ (the leading term $Re(x+iy)^m$ is fixed throughout). 
We view the sequence of functions $\{\phi_k\}_{k\in S}$
as one function 
$$\phi:\mathbb{R}_{m+1\le 2m-4}[x,y]\longrightarrow \mathbb{R}_{m-1\le 2m-6}[x,y].$$
and the sequence of operators $\{\Theta_k\}_{k\in S}$ as one operator 
$$\Theta:\mathbb{R}_{1\le m-4}[x,y]\times \mathbb{R}_{1\le m-4}[x,y]\longrightarrow \mathbb{R}_{m-1\le 2m-6}[x,y].$$

The newly developed terminology allows us to summarize our discussion of $\mathcal{A}_k$ as follows. Given the Taylor polynomial
$[f]_{1\le 2m-4}$ of $f$ we use Formulas \eqref{G22}, \eqref{rhs}, \eqref{indG} and \eqref{indT}
to construct the sequence $\phi=\{\phi_k\}_{k\in S}$ inductively. 
The relation between assertions $\{\mathcal{A}_k\}_{k\in S}$ and the function $\phi$
is expressed by the following 
\begin{predlozhenie} Let $f\in \Lambda_m$ and set $h:=[f]_{1\le m+(m-4)}$. Then
the following are equivalent:
\begin{itemize}
\item [(i)]  There exist a formal Taylor power series $g_0+g_1+...$
such that assertion $\mathcal{A}_k$ is true for all $k\ge 1$.\\
\item [(ii)] The following equation
     \begin{equation}
     \Theta^{\star}\phi(h)=0
     \label{nonlineq}
     \end{equation}  
holds true.
\end{itemize}
\label{endpsr}
\end{predlozhenie}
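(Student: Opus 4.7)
The proof splits into the two directions of the equivalence. The direction (ii) $\Rightarrow$ (i) is essentially a direct consequence of the pseudoinverse construction, while (i) $\Rightarrow$ (ii) requires an inductive comparison of the specific pseudoinverse choices against an arbitrary realization provided by (i).

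For (ii) $\Rightarrow$ (i), suppose $\Theta^{\star}_k \phi_k(h) = 0$ for every $k \in S$. Then $\phi_k \in \mathrm{Im}(\Theta_k)$, hence $P_{\Theta_k}\phi_k = \phi_k$, and the assignment \eqref{indG} produces a genuine solution of the original \eqref{rewrsubst}. Combining with $G^{22}_k$ from \eqref{G22}, the resulting $T_k^{\mathrm{spec}}$ fulfils $\mathcal{A}_k$ on top of $\mathcal{A}_{k-1}$ for each $k \in S$. For $k \geq m-3$, the surjectivity of $\Theta_k$ (to be proven in Section \ref{KeyAlg}) renders \eqref{rewrsubst} solvable for any right-hand side, so the construction continues to all degrees and produces the formal power series realizing all $\mathcal{A}_k$ required in (i).

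For (i) $\Rightarrow$ (ii), I argue by strong induction on $k \in S$ that $\Theta^{\star}_k \phi_k^{\mathrm{spec}}(h) = 0$. The base case $k = 1$ is immediate: by \eqref{rhs} with $T_0 = Id$ fixed, $\phi_1$ is determined by $h$ alone and equals the $\phi_1$ produced by any realization, so the existence of such a realization forces $\phi_1 \in \mathrm{Im}(\Theta_1)$. For the inductive step, the hypothesis together with the reformulation in Section \ref{PSA1} yields that $T_0^{\mathrm{spec}}, \ldots, T_{k-1}^{\mathrm{spec}}$ is itself a genuine partial realization of $\mathcal{A}_{k-1}$. I then compare $\phi_k^{\mathrm{spec}}$ against $\phi_k^{\mathrm{real}}$ computed from any realization furnished by (i), writing $\Delta_j = T_j^{\mathrm{spec}} - T_j^{\mathrm{real}}$ for $j < k$. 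By the inductive hypothesis and the pseudoinverse structure the differences satisfy $\Theta_j(\Delta_j^{11}, \Delta_j^{12}) = \phi_j^{\mathrm{spec}} - \phi_j^{\mathrm{real}} \in \mathrm{Im}(\Theta_j)$; to close the induction it suffices to verify that propagating the $\Delta_j$'s through \eqref{rhs} yields $\phi_k^{\mathrm{spec}} - \phi_k^{\mathrm{real}} \in \mathrm{Im}(\Theta_k)$, since $\phi_k^{\mathrm{real}}$ already lies there.

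The main obstacle is precisely this propagation. The linear part, coming from $[dT_{k-1}df]_{k+m-2}$, is tractable by degree-counting: a leading-order analysis shows that only $\Delta_{k-1}$ contributes at the relevant order $k+m-2$, and its contribution mirrors the defining expression of $\Theta_k$, landing it manifestly in $\mathrm{Im}(\Theta_k)$ after a short calculation using the harmonicity of $f_0$. The quadratic part, coming from $[\det T_{k-1}]_k$, is more delicate; one exploits the $2\times 2$ determinantal identity $\det(A+B) = \det A + \det B + \operatorname{tr}(A \cdot \operatorname{adj} B)$ together with the constraint \eqref{G22} relating $G^{22}_j$ to $[\det T_{j-1}]_j$, reducing the desired inclusion to an algebraic identity in $\mathbb{R}_{k+m-2}[x,y]$ that can be checked by direct polynomial manipulation. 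Once this is settled, the induction closes and yields $\Theta^{\star}_k \phi_k^{\mathrm{spec}} = 0$ for every $k \in S$, which is (ii).
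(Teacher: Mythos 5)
Your direction (ii)~$\Rightarrow$~(i) follows the paper's intended route: interpreting the condition as saying that $\phi_k$ lies in $\mathrm{Im}\,\Theta_k$ for every $k\in S$, the pseudoinverse construction supplies a genuine solution of Equation~\eqref{rewrsubst} at each such step, and for $k\ge m-3$ the operator $\Theta_k$ is onto, so the induction closes. That part is fine.

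The direction (i)~$\Rightarrow$~(ii) is where there is a genuine gap. You correctly identify the subtlety — the quantity $\phi_k$ is built from a \emph{specific} pseudoinverse construction, whereas (i) only hands you an arbitrary realization — but you then embark on a heavy machinery of tracking differences $\Delta_j=T_j^{\mathrm{spec}}-T_j^{\mathrm{real}}$ and propagating them through the nonlinear recursion \eqref{rhs}. That analysis is both incomplete (you explicitly defer the ``short calculation'' for the linear piece and the ``algebraic identity'' for the quadratic piece without carrying them out) and not obviously sound: the degree-counting claim that ``only $\Delta_{k-1}$ contributes at the relevant order $k+m-2$'' overlooks contributions of $dG_jd(f-f_0)$ for $j<k-1$, which do reach degree $k+m-2$ when $f-f_0$ has terms of degree close to $m+1$. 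More importantly, the entire propagation argument is unnecessary: for every $k\in S=\{1,\dots,m-4\}$ the operator $\Theta_k$ is \emph{injective} (dimension count: $\dim\bigl(\mathbb{R}_k[x,y]^2\bigr)=2(k+1)<k+m-1=\dim\mathbb{R}_{k+m-2}[x,y]$ precisely when $k\le m-4$, combined with Proposition~\ref{maxrank}). Hence at each step $k\in S$ the pair $(G^{11}_k,G^{12}_k)$ solving \eqref{rewrsubst} is \emph{unique}, $G^{22}_k$ is then forced by \eqref{G22}, and therefore $T_k$ is forced. Starting from the fixed $T_0=\mathrm{Id}$, any realization furnished by (i) must coincide, degree by degree, with the pseudoinverse sequence $T_0^{\mathrm{spec}},\dots,T_{m-4}^{\mathrm{spec}}$. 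Consequently all your $\Delta_j$ vanish identically and $\phi_k^{\mathrm{real}}=\phi_k^{\mathrm{spec}}$ for $k\in S$, so the solvability forced on $\phi_k^{\mathrm{real}}$ by the existence of the realization transfers verbatim to $\phi_k^{\mathrm{spec}}$. This is the observation the paper relies on (it is made explicit in the proof of Lemma~\ref{submers}), and it replaces your entire propagation scheme with a one-line uniqueness argument.
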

For $m=2,3,4$ we have that $m+1>2m-4$ and $h\in \mathbb{R}_{m+1\le 2m-4}[x,y]$ is simply not defined. For these values of $m$ we agree to say that Equation \eqref{nonlineq}
holds true (by default).

The second goal of this subsection is to get more information about  
$\phi$ as a function of $h=[f]_{m+1\le m+(m-4)}$. Note that something we know
already, namely, $\phi(0)=0$ and see also Remark \ref{comprhs}. The following
proposition gives us more hand on $\phi$. 
\begin{predlozhenie}
The function $\phi$ is a submersion. That is the derivative of it is surjective at 
every point.
\label{reg}
\end{predlozhenie}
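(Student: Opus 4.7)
The plan is to exploit a triangular structure of $\phi$ with respect to the grading by homogeneous degree on source and target. Decompose
$$\mathbb{R}_{m+1\le 2m-4}[x,y]=\bigoplus_{j=1}^{m-4}\mathbb{R}_{m+j}[x,y],\qquad \mathbb{R}_{m-1\le 2m-6}[x,y]=\bigoplus_{k=1}^{m-4}\mathbb{R}_{m+k-2}[x,y];$$
the component $\phi_k$ lives in the $k$-th target summand. The key first step is to prove inductively in $k$ that $\phi_k$ depends on $h$ only through the first $k$ summands of the source, with the explicit affine form
$$\phi_k(h)=-\triangle_{st}\bigl([f]_{m+k}\bigr)+\Psi_k\bigl([f]_{m+1},\ldots,[f]_{m+k-1}\bigr)$$
for some polynomial function $\Psi_k$ of the earlier arguments.

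To obtain this form I would split $T_{k-1}=G_0+T'_{k-1}$ with $T'_{k-1}=G_1+\ldots+G_{k-1}$ of strictly positive homogeneity, and analyze \eqref{rhs} piece by piece. Since $G_0$ is constant and equals the Euclidean Hodge-star matrix, $d(G_0\,df)=\triangle_{st}(f)\,dx\wedge dy$; extracting degree $k+m-2$ produces exactly $\triangle_{st}([f]_{m+k})\,dx\wedge dy$, and harmonicity $\triangle_{st}f_0=0$ guarantees that the $df_0$ part of $df$ contributes nothing here. For the $T'_{k-1}$ contribution a direct degree count (an entry of $T'_{k-1}$ of degree $j\in\{1,\ldots,k-1\}$ paired with $d[f]_n$, after the outer $d$, lands in degree $j+n-2$, matching $k+m-2$ only for $n=m+k-j\in\{m+1,\ldots,m+k-1\}$) shows that only $[f]_{m+1},\ldots,[f]_{m+k-1}$ enter; the determinant correction $([\det T_{k-1}]_k f_{0y})_y$ is handled analogously, being bilinear in entries of $T'_{k-1}$. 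By induction through \eqref{indG}, each $G_j$ is polynomial in $[f]_{m+1},\ldots,[f]_{m+j}$, so $\Psi_k$ inherits the claimed dependence.

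With the triangular form in hand, $d\phi|_h$ is block lower triangular with respect to the above decompositions, and the diagonal blocks are the constant linear maps $-\triangle_{st}:\mathbb{R}_{m+k}[x,y]\to\mathbb{R}_{m+k-2}[x,y]$, independent of $h$. A block lower triangular operator is surjective iff each of its diagonal blocks is, and for every $N\ge 2$ the standard Laplacian is onto on $\mathbb{R}_N[x,y]$: its kernel is the two-dimensional space of real harmonic homogeneous polynomials of degree $N$, and $\dim\mathbb{R}_N[x,y]-2=N-1=\dim\mathbb{R}_{N-2}[x,y]$. This yields surjectivity of $d\phi|_h$ at every $h$.

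The hard part will be the bookkeeping in the first step: one must simultaneously track the explicit $f$-dependence through $df$ in \eqref{rhs} and the implicit $f$-dependence buried inside $T_{k-1}$ via the inductive constructions \eqref{indG} and \eqref{indT}, and verify that no $[f]_{m+j}$ with $j\ge k+1$ can leak into $\phi_k$ while the top-order piece is precisely $-\triangle_{st}([f]_{m+k})$. The harmonicity of $f_0$ is the algebraic fact that prevents $f_0$ from polluting lower degrees and makes the separation of variables clean.
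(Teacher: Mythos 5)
Your proposal follows essentially the same route as the paper: decompose source and target by homogeneous degree, observe that $\phi_k$ depends on $h$ only through $h_1,\ldots,h_k$, conclude that $d\phi$ is triangular, and check that the diagonal block $\partial\phi_k/\partial h_k$ is (up to sign) the standard Laplacian $\triangle_{st}\colon\mathbb{R}_{m+k}[x,y]\to\mathbb{R}_{m+k-2}[x,y]$, which is onto.

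The one place you go further than the paper is in actually carrying out the bookkeeping you flag as ``the hard part.'' The paper simply asserts that $\phi_k$ depends only on $h_1,\ldots,h_k$ ``by construction,'' differentiates Equation \eqref{rhs} in $h_k$, and then invokes, implicitly, the same degree count you make explicit: $T_{k-1}-T_0$ has entries of strictly positive degree, so pairing it with $d(\partial f/\partial h_k)\in\Omega^1$ of degree $m+k-1$ and applying $d$ lands strictly above degree $k+m-2$, leaving only the $T_0$ term, i.e.\ $-\triangle_{st}$. Your inductive derivation of the affine form $\phi_k(h)=-\triangle_{st}([f]_{m+k})+\Psi_k([f]_{m+1},\ldots,[f]_{m+k-1})$ is a clean way to package both the triangularity and the identification of the diagonal block at once, and it is worth noting that the claimed harmonicity of $f_0$ is not actually needed for the degree count (the $f_0$ contribution lands in degree $m-2<k+m-2$ automatically), though it is harmless. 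The ``lower'' versus ``upper'' triangular discrepancy with the paper is purely a choice of row/column convention and is immaterial.
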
 
\begin{proof}
The idea is to fix a point $h\in \mathbb{R}_{m+1\le 2m-4}[x,y]$
and and write out the derivative of the function $\phi$ at $h$ with respect to 
certain splittings of the domain and of the target. Namely, 
 
 $$\mathbb{R}_{m+1\le 2m-4}[x,y]=\bigoplus_{l=1}^{m-4} \mathbb{R}_{m+l}[x,y],$$
 $$\mathbb{R}_{m-1\le 2m-6}[x,y]=\bigoplus_{k=1}^{m-4} \mathbb{R}_{m+k-2}[x,y].$$
We decompose $h$ as $h=\Sigma_{l=1}^{m-4}h_l$ with $h_l\in \mathbb{R}_{m+l}[x,y]$.
Note that since by construction $\phi_k$ depends only on $h_1,...,h_k$, the derivative
of $\phi$ at $h$ written out with respect to the splittings above has an upper-triangular form with the operator $\partial{\phi_k}/\partial{h_k}$ on the diagonal, so it remains to show that the last operator is surjective. 
Differentiation of Equation \ref{rhs} with respect $h_k$ gives us
  $$\frac{\partial\phi_k}{\partial h_k}=
  -[dT_{k-1}d\frac{\partial{f}}{\partial{h_k}}]_{k+m-2}/dx\wedge dy.$$
%-r^{k+m-2}[d\frac{\partial{T_{k-1}}}{\partial{h}}|df_0]/dx\wedge dy
%-(r^k[(D_jdet)\frac{\partial T_{k-1}}{P}|_{P=0}(h)]f_{0y})_y
The other terms vanish, because $T_{k-1}$ does not depend on $h_k$. Since 
$h_k\in \mathbb{R}_{m+k}[x,y]$, we can rewrite 
  $\partial\phi_k/\partial h_k$ as $-dT_0d$ which is up to a sign the standard Laplacian 
  $$\triangle_{st}:\mathbb{R}_{m+k}[x,y]\longrightarrow \mathbb{R}_{m+k-2}[x,y],$$
and the latter is a surjective linear operator between linear spaces as written above.
\end{proof}
This proposition allows us to establish a lemma that will help us to deal with Theorem \ref{rigour}.
\begin{lemma}
The set $$\mathcal{E}:=\{h\in \mathbb{R}_{m+1\le 2m-4}[x,y]|\Theta^{\star}\phi(h)=0\}$$
is a submanifold in $\mathbb{R}_{m+1\le 2m-4}[x,y]$ of codimension $(m-2)(m-3)-2$.
\label{submers}
\end{lemma}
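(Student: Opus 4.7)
The plan is to show that $\Theta^{\star}\circ\phi$ is a submersion and then apply the preimage theorem to the level set $\mathcal{E}=(\Theta^{\star}\phi)^{-1}(0)$. The codimension of $\mathcal{E}$ will then equal the dimension of the target of $\Theta^{\star}$, which I will compute to be $(m-2)(m-3)-2$.

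For the submersion property, I would combine two ingredients. First, Proposition \ref{reg} tells us that $\phi$ is a submersion, so at every $h$ the differential
\[
d(\Theta^{\star}\circ\phi)_h \;=\; \Theta^{\star}\circ d\phi_h
\]
has the same image as the linear map $\Theta^{\star}$. It therefore suffices to show that $\Theta^{\star}$ is surjective, equivalently, that $\Theta$ is injective. Since $\Theta$ splits as the direct sum $\bigoplus_{k\in S}\Theta_k$ with respect to the degree grading on the domain and target, injectivity of $\Theta$ reduces to injectivity of each component $\Theta_k$ for $k\in S=\{1,\ldots,m-4\}$.

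At this point I would invoke the maximal-rank statement for $\Theta_k$ taken for granted in Subsection \ref{PSA1} (and to be established in Section \ref{KeyAlg}). The domain $\mathbb{R}_k[x,y]\times\mathbb{R}_k[x,y]$ has dimension $2(k+1)$ while the target $\mathbb{R}_{k+m-2}[x,y]$ has dimension $k+m-1$, and the inequality $2(k+1)<k+m-1$ holds exactly for $k<m-3$. Since every $k\in S$ satisfies this, maximal rank forces $\Theta_k$ to be injective, so $\Theta^{\star}$ is surjective and $\Theta^{\star}\circ\phi$ is a submersion everywhere.

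It remains to compute the codimension, which equals the dimension of the target of $\Theta^{\star}$:
\[
\dim\bigl(\mathbb{R}_{1\le m-4}[x,y]\bigr)^{2}\;=\;2\sum_{k=1}^{m-4}(k+1)\;=\;(m-3)(m-2)-2\;=\;(m-2)(m-3)-2,
\]
matching the asserted codimension. The main obstacle in this argument is not the lemma itself, which is a clean application of the preimage theorem once the pieces are assembled, but rather the two inputs on which it rests: Proposition \ref{reg} (already proved) and the maximal-rank property of each $\Theta_k$, which is the genuinely substantive algebraic fact deferred to Section \ref{KeyAlg}.
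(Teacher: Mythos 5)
Your proposal is correct and follows essentially the same route as the paper: use injectivity of $\Theta$ (from the maximal-rank property together with the dimension inequality $2(k+1)<k+m-1$ for $k\le m-4$) to get surjectivity of $\Theta^{\star}$, combine with Proposition \ref{reg} to see that $\Theta^{\star}\circ\phi$ is a submersion, and read off the codimension as the dimension of the target of $\Theta^{\star}$. You spell out the block-diagonal decomposition $\Theta=\bigoplus_{k\in S}\Theta_k$ and the chain-rule identity $d(\Theta^{\star}\circ\phi)_h=\Theta^{\star}\circ d\phi_h$ a bit more explicitly than the paper does, but the argument and the computation $2\sum_{k=1}^{m-4}(k+1)=(m-3)(m-2)-2$ are the same.
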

\begin{proof}
Recall that the oprator $\Theta$ has maximal rank. Counting dimensions for the domain and the target spaces of $\Theta$ shows that it is injective. 
Therefore its adjoint $\Theta^{\star}$ is surjective. Thus,
the derivative of the composition $\Theta^{\star}\circ \phi$ is surjective at every point, so the level sets of $\Theta^{\star}\circ \phi$ are submanifolds. The codimension of a level set is
the dimension of the target space of $\Theta^{\star}\circ \phi$, that is
$$\Sigma_{k=1}^{m-4}dim(\mathbb{R}_k[x,y]\times \mathbb{R}_k[x,y])=(m-3)(m-2)-2.$$
\end{proof}

\section{Key algebraic trick.}
\label{KeyAlg}
In this section we analyse algebraic properties of the operator $\Theta_k$. 
The main goal is the following 
\begin{predlozhenie}
For any $k\ge 1$ the operator $\Theta_k$ has maximal rank.
\label{maxrank}
\end{predlozhenie}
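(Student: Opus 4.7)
The plan is to complexify by combining $(Q_1, Q_2)$ into $Q = Q_1 + iQ_2$ and working in the coordinates $z = x+iy$, $\bar z = x-iy$. Under the identification $\mathbb{R}_k[x,y] \times \mathbb{R}_k[x,y] \cong \{\text{complex polynomials in }(z, \bar z)\text{ of total degree }k\}$, which matches real dimensions $2(k+1)$ on the nose, I expect $\Theta_k$ to become an almost diagonal operator on monomials $z^p \bar z^q$, so that both injectivity (for $k \le m-3$) and surjectivity (for $k \ge m-3$) can be read off directly.

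First I would rewrite $\Theta_k$ in complex form. Since $f_0 = (z^m + \bar z^m)/2$, the conjugate combinations $f_{0x} \pm if_{0y}$ collapse to $mz^{m-1}$ and $m\bar z^{m-1}$ respectively; translating $L(Q_1, Q_2)df_0$ into the $(dz, d\bar z)$ basis and taking the outer $d$ in terms of $\partial_z, \partial_{\bar z}$ should yield the clean formula
$$\Theta_k(Q) = 2m\, Re\bigl(\partial_z(Q \cdot z^{m-1})\bigr).$$
Expanding $Q = \sum_{p+q=k} c_{p,q} z^p \bar z^q$ with $c_{p,q} \in \mathbb{C}$ then gives
$$\Theta_k(z^p \bar z^q) = 2m(p+m-1)\, Re\bigl(z^{p+m-2}\bar z^q\bigr),$$
with coefficients $p+m-1$ that never vanish for $p \ge 0$. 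Thus the complex-linear lift $\tilde\Theta_k : Q \mapsto \partial_z(Q z^{m-1})$ is visibly injective onto the subspace of $(z, \bar z)$-polynomials $\sum s_{A,B} z^A \bar z^B$ of total degree $k+m-2$ with $s_{A,B} = 0$ for $A < m-2$.

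With this reduction I would analyze the two regimes. For $k \le m-3$: every image monomial satisfies $A = p+m-2 \ge m-2 > k-p = B$, so lives in the open cone $A > B$ where $Re(z^A \bar z^B)$ and $Im(z^A \bar z^B)$ are linearly independent and distinct pairs $(A, B)$ yield independent target monomials; combined with the nonvanishing factor, this forces $\ker \Theta_k = 0$. For $k \ge m-3$: I would construct a preimage of an arbitrary real target $R = \sum r_{A,B} z^A\bar z^B$ (with $r_{B,A} = \bar r_{A,B}$) by splitting according to the smaller of $A, B$. In the asymmetric case (exactly one of $A, B$ is less than $m-2$), the coefficient of $z^A \bar z^B$ in $\Theta_k(Q)$ uniquely determines a single $c_{p,q}$. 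In the symmetric case ($A, B \ge m-2$), the coefficients of $z^A \bar z^B$ and its mirror $z^B \bar z^A$ give conjugate-related equations in the two complex unknowns $c_{A-m+2,B}$ and $c_{B-m+2, A}$, collapsing to one complex equation in two complex unknowns and hence always solvable. A rank-nullity count then confirms $\dim_{\mathbb{R}} \ker \Theta_k = k-m+3$, matching surjectivity onto a target of real dimension $k+m-1$.

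The main obstacle I anticipate is bookkeeping in the surjective regime: the reality condition $s_{B, A} = \bar s_{A, B}$ on the target has to be reconciled with the one-sided image constraint $A \ge m-2$ coming from $\tilde\Theta_k$, and the involution $(A, B) \leftrightarrow (B, A)$ on equations, together with its occasional fixed point at $A = B = (k+m-2)/2$, must be handled without overcounting. Once the complex formula for $\Theta_k$ is in place, however, the remaining argument is elementary linear algebra.
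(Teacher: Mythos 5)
Your proposal is correct and rests on the same key move as the paper: complexifying to obtain $\Theta_k(Q)=2m\,Re\,\partial_z(Qz^{m-1})$, then evaluating on the monomials $z^p\bar z^q$ and observing that the resulting coefficients $p+m-1$ never vanish. The paper packages the final dimension count via the $S^1$-irreducible summands $Irr_{k+m-2}^q$ of the target, whereas you work directly with the monomial basis and the reality condition $s_{B,A}=\bar s_{A,B}$, but this is the same computation in two different notations.
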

Prior to giving the proof we rewrite 
$$\Theta_k:\mathbb{C}_k[x,y]\cong \mathbb{R}_k[x,y]\times \mathbb{R}_k[x,y]\longrightarrow
\mathbb{R}_{k+m-2}[x,y]$$
and bring some representation
theory into play. Formula \eqref{complex} for the map $L$ suggests to introduce complex notation:
$z=x+iy$, $\partial_z=\frac{1}{2}(\partial_x-i\partial_y)$ and $Q:=Q_1+iQ_2\in \mathbb{C}_k[x,y]$
for $(Q_1,Q_2)\in \mathbb{R}_k[x,y]\times \mathbb{R}_k[x,y]$.

It is an elementary calculation 
that $$\Theta_k(Q)=2mRe\partial_z(Qz^{m-1}).$$
We recall the standard action of the group $S^1$ (considered as the unit circle in $\mathbb{C}$) on $\mathbb{R}_{k+m-2}[x,y]$
(the target space of $\Theta_k$) by coordinate change: for $s\in S^1$ and  $P\in\mathbb{R}_{k+m-2}[x,y]$ we set $$(sP)(x,y):=P(s(x+iy)).$$
We decompose the target space of $\Theta_k$ into the irreducible summands of this representation:
$$\mathbb{R}_{k+m-2}[x,y]=\Sigma_{q=0}^{[(k+m-2)/2]}Irr_{k+m-2}^q,$$
where $$Irr_{k+m-2}^q= (x^2+y^2)^qSpan_{\mathbb{R}}\{Re(x+iy)^p,Im(x+iy)^p\},$$ 
$p+2q=k+m-2$. Here ``$[\cdot]$'' denotes ``the largest integer not greater than''. Now we can start the proof of Proposition \ref{maxrank}. 
\begin{proof}
%We return to showing that the operator $D$ has maximall rank. 
The operator $\Theta_k$ having maximal rank is equivalent to
the following inequality 
$$\dim Im\Theta_k\ge min(\dim \mathbb{C}_k[x,y], \dim \mathbb{R}_{k+m-2}[x,y])=min(2(k+1),k+m-1).$$
We set $$M(k):=min(k,[(k+m-2)/2])$$ and show that 
\begin{equation}
\Sigma_{q=0}^{M(k)}Irr_{k+m-2}^q\subset Im\Theta_k. 
\label{inclim}
\end{equation}
Indeed, for a nonnegative $q\le M(k)$ we
set $n:=k-q$ and note that $p=k+m-2-2q=n+m-2-q$.
We compute:
$$\frac{1}{2m}\Theta_k(\bar{z}^qz^n)=Re(\bar{z}^qz^nz^{m-1})_z=
Re(z^{n+m-1}\bar{z}^q)_z=$$ 
$$=Re(n+m-1)z^{n+m-2-q+q}\bar{z}^q=Re(n+m-1)z^pz^q\bar{z}^q=$$
$$=(n+m-1)(x^2+y^2)^qRe(x+iy)^p,$$
and analogously $$\frac{1}{2m}\Theta_k(i\bar{z}^qz^n)=-(n+m-1)(x^2+y^2)^qIm(x+iy)^q,$$
meaning that $Irr_{k+m-2}^q\subset Im\Theta_k$. Now Inclusion 
\eqref{inclim} implies the inequality on the level of 
dimensions: $$\dim Im\Theta_k\ge \dim (\Sigma_{q=0}^{M(k)}Irr_{k+m-2}^q)=min(2(k+1),k+m-1).$$
\end{proof}
The primary goal of this section has been achieved, but we want make an 
additional remark, 
concerning the case when the operator $\Theta_k$ is injective, but not surjective:
we point out explicitly a subspace that is missed by the image of $\Theta_k$. 
\begin{remark}
Assume that $m\ge 5$. Then $(x^2+y^2)^{m-3}\notin Im\Theta_{m-4}$  
\label{missed}
\end{remark}
\begin{proof}
In this case the dimension of the domain equals $2m-6$ which is strictly less than 
$(2m-5)$ --- the dimension of the target space. So, we know there is a $1$-dimensional
subspace of the target which misses the image of the operator. More precisely,
the target space decomposes as follows:
$$\mathbb{R}_{2m-6}[x,y]=\Sigma_{q=0}^{m-4}Irr_{2m-6}^q\oplus Irr_{2m-6}^{m-3},$$
as we showed in the proof of Proposition \ref{maxrank} (see Inclusion \eqref{inclim}), 
the first summand belongs to the image of $\Theta_k$,
but the dimension of the first summand equals the dimension of the domain of $\Theta_k$,
therefore the first summand is exactly the whole image of $\Theta_k$, so the second must be missed. It writes out as $Span_{\mathbb{R}}{(x^2+y^2)^{m-3}}$.
\end{proof}

\section{Approximate solution.}
\label{apprsol}
We keep all the notation from Section \ref{Psa}.

\begin{predlozhenie}
Let $f\in \Lambda_m$ and set $h:=[f]_{m+1\le 2m-4}$. Then the following are equivalent.
\begin{itemize}
\item[(1)]  There exists a locally defined Riemannian metric $g$ such that 
$$\triangle_gf=O(exp).$$
\item[(2)]  Equation \eqref{nonlineq} holds true. 
\end{itemize}

\label{approx}
\end{predlozhenie}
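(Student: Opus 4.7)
The plan is to chain Propositions \ref{power} and \ref{endpsr} together with the Borel-type realization theorem of \cite{Mir} recalled in the introduction. The forward direction is essentially automatic; the real content of the proposition is the backward direction, where I must upgrade a purely formal solution into an actual Riemannian metric making $\triangle_g f$ exponentially small.

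For $(1) \Rightarrow (2)$, I would take the formal Taylor series of $g$ at $(0,0)$, to which Proposition \ref{power} applies directly and yields $\mathcal{A}_k$ for every $k \ge 1$; by Proposition \ref{endpsr} this is equivalent to Equation \eqref{nonlineq}.

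For $(2) \Rightarrow (1)$, Proposition \ref{endpsr} first gives me a formal series $\hat{g} = g_0 + g_1 + \ldots$ (with $g_0 = Id$) realizing $\mathcal{A}_k$ for all $k$. Next I would invoke the realization theorem of \cite{Mir} entrywise to produce a smooth symmetric matrix-valued function $\tilde{g}$ whose Taylor series at $(0,0)$ equals $\hat{g}$; positive-definiteness at the origin then extends to some neighborhood, where $\tilde{g}$ is a genuine Riemannian metric. To restore the standing normalization I would set $g := \tilde{g}/\sqrt{\det \tilde{g}}$. Since $\mathcal{A}_k(ii)$ holds for every $k$, the function $\det \tilde{g} - 1$ has vanishing Taylor series and therefore belongs to $O(exp)$, so $g$ and $\tilde{g}$ share the same formal Taylor series $\hat{g}$ while $\det g \equiv 1$ by construction. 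It then remains to verify $\triangle_g f \in O(exp)$: fix an arbitrary $k$ and split $\star_g = T_k + R_k$ with $R_k$ having Taylor series starting at order $\ge k+1$; then $R_k \, df$ has leading order at least $k+m$, so $[dR_k \, df]_n = 0$ for $n \le k+m-2$. Combined with $\mathcal{A}_k(i)$ this gives $[\triangle_g f]_n = 0$ for every $n \le k+m-2$, and since $k$ was arbitrary the entire Taylor series of $\triangle_g f$ at $(0,0)$ vanishes, so $\triangle_g f \in O(exp)$ by the criterion in Section \ref{O(exp)}.

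The main obstacle is really the bookkeeping in the backward direction: both the entrywise realization step and the conformal renormalization step must be shown to introduce only $O(exp)$ errors, so that the combinatorial content encoded in the family $\{\mathcal{A}_k\}_{k \ge 1}$ survives the passage from a formal series to an honest smooth metric. This is precisely the slack we can afford, since the target condition $\triangle_g f = O(exp)$ is itself insensitive to $O(exp)$ perturbations of the metric.
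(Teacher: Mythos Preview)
Your proof is correct and follows essentially the same route as the paper: in both directions you invoke Propositions~\ref{power} and~\ref{endpsr}, and for $(2)\Rightarrow(1)$ you realize the formal series via \cite{Mir}, then normalize the determinant and read off $\triangle_g f=O(exp)$ from the assertions $\mathcal{A}_k$. The only cosmetic difference is that the paper applies Mirkil's theorem to the Hodge-star series $G_0+G_1+\ldots$ (and therefore needs an extra correction to force tracelessness before normalizing the determinant), whereas you realize the symmetric matrix $\hat g$ directly, so tracelessness of $\star_g$ is automatic; this is a mild simplification but not a different idea.
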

\begin{proof}
The implication from $(1)$ to $(2)$ is a combination of Proposition \ref{power} and 
Proposition \ref{endpsr}. Assume, conversely, that $(2)$ holds. This gives us an infinite
power series $g_0+g_1+...+g_k+...$ or equivalently $G_0+G_1+...+G_k+...$
such that $\mathcal{A}_k$ is true for all $k$. By the result of Mirkil \cite{Mir}
there exits a $2\times 2$ matrix $\tilde T$ whose entries are smooth functions defined locally around $(0,0)$ such that Taylor power series of $\tilde T$ at $(0,0)$ is exactly $G_0+G_1+...+G_k+...$. We apply Proposition \ref{endpsr} and use assertions 
$\mathcal{A}_k$ to get the following asymptotic behaviour of $\tilde T$ at $(0,0)$:\\
(a) $d\tilde Tdf=O(exp)$,\\
(b) $Trace \tilde T=O(exp)$,\\ 
(c) $\det \tilde T=1+O(exp)$.\\
Now we are going to correct the matrix $\tilde T$ a little bit.
The property (b) says that the sum of the diagonal elements of the operator
$\tilde T$ is of the class $O(exp)$, therefore by changing the lower right 
element of $\tilde T$, we can achieve that the new operator 
(called again $\tilde T$) is now traceless and the properties (a) and (c) 
still hold true. The last step is to set $T=(\det \tilde T)^{-1/2}\tilde T$
(we shrink the domain of definition of $\tilde T$ if necessary to ensure that 
$\det\tilde T>0$). 
Clearly, the matrix $T$ is traceless and $\det T=1$. We check whether it
satisfies property (a). Indeed, $$dTdf=d ((det \tilde T)^{-1/2}\tilde Tdf)
=d(det \tilde T)^{-1/2}\wedge \tilde Tdf+(det \tilde T)^{-1/2}d\tilde Tdf.$$
Working out the two terms one by one gives us:
$$d(det \tilde T)^{-1/2}\wedge \tilde Tdf=d(1+O(exp))^{-1/2}\wedge 
\tilde Tdf=$$
$$=d(1+O(exp))\wedge \tilde Tdf=d(O(exp))\wedge \tilde Tdf=
O(exp);$$ $$(det \tilde T)^{-1/2}d\tilde Tdf=(1+O(exp))O(exp)=O(exp).$$

Since $T$ is traceless, it can be written as $\left(\begin{array}{cc}
                               -g^{12}&-g^{22}\\
                                g^{11}&g^{12}\\
                       \end{array}\right)$.
We set the Riemann metric $g$ to be defined by the matrix 
$\left(\begin{array}{cc}
                                 g^{11}&g^{12}\\
                                 g^{12}&g^{22}\\
                       \end{array}\right).$ 
It is clear that $T=\star_g$. Now the asymptotics $dTdf=O(exp)$ finishes the proof.
\end{proof}

\section{Proof of Theorem \ref{conj}.}
\label{peack}
The proof of the next proposition is long and we postpone in to Section \ref{technanal}.
\begin{predlozhenie}
Let $f\in \Lambda_m$ and assume that there
exists a locally defined Riemannian metric $g$ with such that 
$$\triangle_gf=O(exp),$$ then we can modify $g$ in an exponentially small fashion
to get that $$\triangle_gf=0$$
\label{final}
\end{predlozhenie}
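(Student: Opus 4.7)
The plan is to exploit the integrability of the almost complex structure $T_0=\star_g$ on a small ball to reduce the problem to a single pointwise division of an $O(exp)$ function by a smooth function with an isolated zero of finite order; such a division stays in $O(exp)$ by the lemmas of Section \ref{O(exp)}. Since $T_0^2=-Id$, Gauss' theorem on isothermal coordinates (as already used in Proposition \ref{harmreform}) yields smooth coordinates $(u,v)$ near $(0,0)$ in which $T_0$ acts as the standard rotation $J_{st}$ on 1-forms. Because $g_{(0,0)}=Id$, the isothermal change of coordinates can be arranged to have identity linear part at the origin, so that the leading Taylor term of $f$ in $(u,v)$ remains $Re(u+iv)^m$. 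The hypothesis $d(T_0\,df)=O(exp)$ then becomes $\triangle_{st}f=O(exp)$ in these coordinates.

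Next, I would construct an almost-holomorphic function $F_0=f+ih_0$. Since $d(\star df)=\triangle_{st}f\,du\wedge dv$ is $O(exp)$, the standard Poincar\'e homotopy operator on a small ball (contraction against the radial vector field $u\partial_u+v\partial_v$ and integration along rays) produces a 1-form $\alpha_0\in O(exp)$ with $d\alpha_0=d(\star df)$: the $O(exp)$ class propagates through the integral because $h(tu,tv)$ decays faster than any polynomial in $(tu,tv)$, which is a fortiori faster than any polynomial in $(u,v)$ times any power of $t$. Then $\star df-\alpha_0$ is closed and hence exact on the ball, giving a smooth $h_0$ with $dh_0=\star df-\alpha_0$. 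Setting $F_0:=f+ih_0$, a direct computation yields $\bar\partial F_0\in O(exp)$, while $\partial F_0$ has leading Taylor term $mz^{m-1}$ (with $z=u+iv$), an isolated zero of finite order $m-1$ at the origin. Now define $\mu:=\bar\partial F_0/\partial F_0$; by the division lemma of Section \ref{O(exp)} $\mu$ is of class $O(exp)$ on some (possibly smaller) neighborhood of the origin, with $|\mu|<1$ there. By construction $F_0$ solves the Beltrami equation $F_{0\bar z}=\mu F_{0z}$, meaning that $F_0$ is holomorphic with respect to the almost complex structure $T$ determined by $\mu$.

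Since $\mu\in O(exp)$, the associated $T$ satisfies $T=J_{st}+O(exp)$; pulling $T$ back through the isothermal coordinate change produces an almost complex structure $\tilde T$ in the original $(x,y)$-coordinates with $\tilde T-T_0\in O(exp)$. The Riemannian metric $\tilde g$ with $\star_{\tilde g}=\tilde T$ and $\det\tilde g=1$ then differs from $g$ by an element of $O(exp)$, and because $f=Re(F_0)$ is the real part of a $T$-holomorphic function, $\triangle_{\tilde g}f=d(\tilde T\,df)=0$, giving the desired exact harmonicity after an $O(exp)$ modification of $g$. The main technical obstacle is the $O(exp)$ bookkeeping through the three reductions (Poincar\'e primitive, pointwise division, diffeomorphic pullback): each step is routine in view of Section \ref{O(exp)}, but the verification that the explicit Poincar\'e integral inherits $O(exp)$ from its integrand, and that the pullback under a smooth diffeomorphism fixing the origin preserves $O(exp)$, constitutes the bulk of the work.
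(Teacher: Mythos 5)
Your proposal is correct and amounts to a genuinely different proof from the one in the paper. The paper works entirely in the original $(x,y)$-coordinates and never passes to isothermal coordinates: it introduces two auxiliary functions $A=g^{12}f_x+g^{22}f_y$ and $B=g^{11}f_x+g^{12}f_y$ with $A_y+B_x=0$, solves the resulting algebraic system for $g^{11},g^{12},g^{22}$ in terms of $A,B$, and then controls the divisions by $f_x$ and $f_y$ (whose leading homogeneous parts vanish along lines through the origin) by working separately on the cone regions $\Omega_{11}=\Omega^\delta(f_{0x})$ and $\Omega_{22}=\Omega^\delta(f_{0y})$ and gluing via the determinant relation; the $O(exp)$ smallness is propagated by choosing $A=A_\infty+\phi_x$, $B=B_\infty-\xi-\phi_y$ with $\xi(x,y)=\int_0^x\kappa(\tilde x,y)\,d\tilde x$ a partial primitive of $\kappa=\triangle_{\tilde g}f$. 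Your route replaces this with isothermal coordinates for $\tilde g$, a Poincar\'e primitive in place of the partial primitive, and the Beltrami coefficient $\mu=\bar\partial F_0/\partial F_0$; the crucial gain is that the single division is by $\partial F_0$, whose leading homogeneous part $\tfrac{m}{2}z^{m-1}$ vanishes only at the origin, so the division lemma of Section~\ref{O(exp)} applies via inequality~\eqref{poly1} and the cone decomposition disappears entirely. In exchange, your argument leans on more background (existence and linear normalization of isothermal coordinates, the correspondence between Beltrami coefficients and conformal classes, invariance of $O(exp)$ under pullback by a diffeomorphism fixing the origin), whereas the paper's argument is self-contained and, through its Proposition~\ref{approxi}, also yields as a byproduct the unconditional statement that every $f\in\Lambda_m$ is harmonic for some $C^0$-metric. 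One point worth making explicit if you write this up: $\partial F_0$ and $\mu$ are complex-valued, so the division lemma (stated for real $\sigma$) should be invoked after writing $\mu=\bar\partial F_0\cdot\overline{\partial F_0}/|\partial F_0|^2$, whose denominator has leading term $\tfrac{m^2}{4}(u^2+v^2)^{m-1}$, a real homogeneous polynomial with an isolated zero.
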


Now the concatenation of Propositions \ref{approx} and \ref{final} gives us the following if and only if statement. 
\begin{teorema}
Let $f\in \Lambda_m$ and set $h:=[f]_{m+1\le 2m-4}$. Then there exists a locally defined metric $g$ which makes $f$ harmonic if and only if Equation \eqref{nonlineq}
$$\theta^{\star}\phi(h)=0$$ holds.
\label{central}
\end{teorema}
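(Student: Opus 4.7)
The plan is very short: Theorem \ref{central} is simply the concatenation of Propositions \ref{approx} and \ref{final}, so there is essentially nothing to prove beyond assembling those two statements. I will split the biconditional into its two implications and invoke one proposition for each.

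For the forward direction, assume that $g$ is a locally defined Riemannian metric with $\triangle_g f = 0$. Then trivially $\triangle_g f = O(\exp)$, so hypothesis (1) of Proposition \ref{approx} is satisfied, and the equivalent statement (2) gives $\Theta^{\star}\phi(h) = 0$. This is completely formal.

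For the reverse direction, suppose $\Theta^{\star}\phi(h) = 0$. By the same Proposition \ref{approx}, this produces a locally defined Riemannian metric $g$ with $\triangle_g f = O(\exp)$. Now Proposition \ref{final} applies: it asserts that whenever one has such an \emph{approximate} harmonic-making metric, one can perturb it by an $O(\exp)$-correction to obtain a genuine harmonic-making metric $g'$, for which $\triangle_{g'} f = 0$. Composing these two steps produces the desired metric.

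The only non-routine ingredient is Proposition \ref{final}, whose proof the author has postponed to Section \ref{technanal}; that is where all of the genuine analysis (upgrading exponential flatness at $(0,0)$ to an honest zero in a neighbourhood) is hidden. For the present statement, however, no additional work is required: once Propositions \ref{approx} and \ref{final} are in hand, Theorem \ref{central} follows by two one-line implications as above, and the proof is complete.
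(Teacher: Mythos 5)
Your proposal is correct and takes exactly the same route as the paper: the author likewise presents Theorem \ref{central} as the direct concatenation of Propositions \ref{approx} and \ref{final}, and your two one-line implications make this explicit without adding or omitting anything essential.
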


This allows us to complete the proof of Theorem \ref{conj} and provide a rigorous dressing for Theorem \ref{rigour}. 
For convenience of the reader we briefly recall the milestones: Proposition
\ref{harmreform} reduces the question of diffeomorphic equivalence to the existence
of a metric making a given function harmonic (``inverse problem for the Laplace operator''); Theorem \ref{central} reduces the problem to whether or not Equation \eqref{nonlineq} holds.

For part (1) of 
Theorem \ref{conj} consider $f\in \Lambda_m^{s(m)}$, that is $[f]_{s(m)}=f_0$. 
Then $h=0$, thus $\Theta^{\star}\phi(h)=0$ and we get a metric $g$ locally defined metric $g$ which makes $f$ harmonic near $(0,0)$. Then Proposition \ref{harmreform} gives us that $f$ is diffeomorphically equivalent to $f_0$.

For part (2) of Theorem \ref{conj} assume that $m\ge 5$ and consider 
$$f_{\star}=Re(x+iy)^m+C(x^2+y^2)^{m-2}\in \Lambda_m^{2m-5}\setminus \Lambda_m^{2m-4}$$
for $C\ne 0$. Propositions \ref{harmreform} and \ref{approx} tell us that it is
enough to check that $\Theta^{\star}_k\phi_k(h)\ne 0$ for some $k\in S$ 
with $h=C(x^2+y^2)^{m-2}$. We take $k=m-4$ for this purpose. We apply Remark \ref{comprhs} to the function $f_{\star}$ to get that
$$\phi_{m-4}(h)=-\triangle_{st}C(x^2+y^2)^{m-2}=-4C(m-2)^2(x^2+y^2)^{m-3}$$  
and recall Remark \ref{missed} to get that $\phi_{m-4}(h)\notin Im\Theta_{m-4}$, so
$$\Theta^{\star}_{m-4}\phi_{m-4}(h)\ne 0.$$ This completes the proof of Theorem \ref{conj}.

Now we give a rigorous way of saying that
``{\it functions from $\Lambda_m$ equivalent to $f_0$ form a submanifold of codimension
$(m-2)(m-3)-2$ in $\Lambda_m$}''. 
Since we have reduced the problem of diffeomorphic equivalence
to a certain equation in $\mathbb{R}_{m+1\le 2m-4}[x,y]$, it is natural to replace functions by their truncated Taylor power series in the italic sentence above. 
\begin{teorema}
Assume that $m\ge 5$. Then
the set $$E:=\{[f]_{m+1\le 2m-4}|f\in \Lambda_m,f \sim f_0\}$$
is a submanifold in $\mathbb{R}_{m+1\le 2m-4}[x,y]$ of codimension $(m-2)(m-3)-2$.
Moreover, for any $f\in \Lambda_m$ with $[f]_{m+1\le 2m-4}\in E$ 
we have that $f\sim f_0$.
\label{codim}
\end{teorema}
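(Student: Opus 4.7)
The plan is to identify the set $E$ with the set $\mathcal{E}$ of Lemma \ref{submers} and then invoke that lemma. The enabling observation is that, by Theorem \ref{central}, the property ``there exists a locally defined metric $g$ making $f$ harmonic'' depends on $f\in\Lambda_m$ only through the truncated Taylor polynomial $h:=[f]_{m+1\le 2m-4}$: the equation $\Theta^{\star}\phi(h)=0$ involves no Taylor coefficients of $f$ of order higher than $2m-4$. Combined with Proposition \ref{harmreform}, the same is then true of the diffeomorphic equivalence $f\sim f_0$, so the subset $E\subset\mathbb{R}_{m+1\le 2m-4}[x,y]$ is genuinely well-defined as the image of $\{f\in\Lambda_m:f\sim f_0\}$ under truncation.

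The first step is to prove $E=\mathcal{E}$ by double inclusion. For $E\subseteq\mathcal{E}$: if $h\in E$, pick $f\in\Lambda_m$ with $[f]_{m+1\le 2m-4}=h$ and $f\sim f_0$; Proposition \ref{harmreform} furnishes a metric making $f$ harmonic, and Theorem \ref{central} forces $\Theta^{\star}\phi(h)=0$. For $\mathcal{E}\subseteq E$: given $h\in\mathcal{E}$, form the polynomial $f:=f_0+h\in\Lambda_m$; since $h$ lives in degrees strictly between $m$ and $2m-3$, we have $[f]_{m+1\le 2m-4}=h$, so Theorem \ref{central} supplies a locally defined metric making $f$ harmonic, and Proposition \ref{harmreform} yields $f\sim f_0$, placing $h$ in $E$.

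Once $E=\mathcal{E}$ is established, the submanifold and codimension statement is immediate from Lemma \ref{submers}. The ``moreover'' clause is the same argument run once more: for any $f\in\Lambda_m$ with $[f]_{m+1\le 2m-4}\in E=\mathcal{E}$, the vanishing $\Theta^{\star}\phi([f]_{m+1\le 2m-4})=0$ together with Theorem \ref{central} produces a locally defined metric making $f$ harmonic, and Proposition \ref{harmreform} then gives $f\sim f_0$.

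I do not anticipate a genuine obstacle: the substantive analytic and algebraic work has already been packed into Proposition \ref{harmreform}, Theorem \ref{central}, and Lemma \ref{submers}. The one point worth underscoring is the ``factorization through truncation'' feature of Theorem \ref{central}, which is precisely what makes $E$ well-defined in the finite-dimensional space $\mathbb{R}_{m+1\le 2m-4}[x,y]$ and allows its identification with $\mathcal{E}$; the remainder of the argument is bookkeeping.
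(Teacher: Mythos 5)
Your proof is correct and takes essentially the same route as the paper: reduce via Proposition \ref{harmreform} and Theorem \ref{central} to the condition $\Theta^{\star}\phi(h)=0$, identify $E$ with the set $\mathcal{E}$, and invoke Lemma \ref{submers}. The paper's own proof is just a two-line version of this; your elaboration of the double inclusion and the ``factorization through truncation'' point is a faithful unpacking of the same argument.
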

\begin{proof}
In view of Theorem \ref{central} for $f\in \Lambda_m$ to be equivalent to $f_0$
is the same that for $h=[f]_{m+1\le 2m-4}$ to satisfy Equation \eqref{nonlineq}.
Lemma \ref{submers} completes the proof.
%By Proposition \ref{reg}, the function $\phi$ is a submersion, 
%therefore Equation \eqref{nonlineq} defines a submanifold whose codimension is
%the dimension of the target space of the composition of $\phi$ and $\Theta^{\star}$,
%that is $$\Sigma_{k=1}^{m-4}dim(\mathbb{R}_k[x,y]\times \mathbb{R}_k[x,y])=(m-3)(m-2)-2.$$
\end{proof}
Theorem \ref{codim} is a rigorous version of Theorem \ref{rigour}.

\section{Technical analysis around zero.}
\label{technanal}
In this section we prove Proposition \ref{final}. We restate it here to fix some notation:

\begin{predlozhenie}
Let $f\in \Lambda_m$ and assume that there
exists a locally defined metric $\tilde g$ such that 
$$\triangle_gf=O(exp)$$ on some neighbourhood around $(0,0)$. Then there exists a Riemannian $g$, defined on a possibly smaller neighbourhood of $(0,0)$ than 
$\tilde g$ was such that $$\triangle_gf=0$$ and moreover $$g-\tilde g=O(exp).$$
\label{techfinal}
\end{predlozhenie}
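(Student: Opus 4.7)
The plan is to correct the Hodge star $\tilde T := \star_{\tilde g}$ by an exponentially small, traceless perturbation $\delta T$ so that $T := \tilde T + \delta T$ satisfies $\det T = 1$ and $d(T\,df) = 0$; the corresponding metric $g$ (defined by $\star_g = T$) will then automatically satisfy $g - \tilde g \in O(\exp)$ and $\triangle_g f = 0$. Setting $\epsilon := d(\tilde T\,df) = \triangle_{\tilde g} f \in O(\exp)$, the equation for $\delta T$ reads
\[
d(\delta T \cdot df) = -\epsilon,
\]
subject to $\delta T$ being traceless and $\det(\tilde T + \delta T) = 1$. I will proceed in three steps: (i) a Poincar\'e lemma to find an $O(\exp)$ primitive $\beta$ of $-\epsilon$; (ii) an algebraic inversion producing a traceless $\delta T_0 \in O(\exp)$ with $\delta T_0 \cdot df = \beta$; (iii) an adjustment within the $1$-dimensional kernel of $\delta T \mapsto \delta T \cdot df$ to enforce the determinant constraint exactly.

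For step (i), writing $\epsilon = \psi\,dx\wedge dy$ with $\psi \in O(\exp)$, the explicit primitive $\beta := -\bigl(\int_0^x \psi(t,y)\,dt\bigr)\,dy$ has $d\beta = -\epsilon$, and integration preserves infinite-order vanishing at $(0,0)$, so $\beta \in O(\exp)$ by the criterion of Section~\ref{O(exp)}. For step (ii), writing $\delta T_0 = \bigl(\begin{smallmatrix}-p&-q\\ r&p\end{smallmatrix}\bigr)$, the equation $\delta T_0 \cdot df = \beta$ is an underdetermined $2\times 3$ linear system for $(p,q,r)$; the Moore--Penrose pseudoinverse furnishes a canonical solution whose denominator equals $\det(AA^T) = |df|^4 - f_x^2 f_y^2 \ge \tfrac{3}{4}|df|^4$. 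Since the leading term $|df_0|^2 = m^2(x^2+y^2)^{m-1}$ has an isolated zero only at the origin, $|df|^2 \ge c(x^2+y^2)^{m-1}$ on a punctured neighborhood of $(0,0)$, and the quotient lemma of Section~\ref{O(exp)} yields $\delta T_0 \in O(\exp)$. For step (iii), the kernel of $\delta T \mapsto \delta T \cdot df$ on traceless matrices is one-dimensional away from the origin and generated by the nilpotent matrix $M_{\mathrm{ker}} := \bigl(\begin{smallmatrix} f_xf_y & -f_x^2 \\ f_y^2 & -f_xf_y \end{smallmatrix}\bigr)$. Setting $\delta T := \delta T_0 + \mu\,M_{\mathrm{ker}}$ preserves $\delta T \cdot df = \beta$ (since $M_{\mathrm{ker}} \cdot df = 0$) and, because $\det M_{\mathrm{ker}} = 0$, makes $\det(\tilde T + \delta T)$ affine-linear in $\mu$, with leading coefficient $\operatorname{tr}\bigl(\mathrm{adj}(\tilde T + \delta T_0)\,M_{\mathrm{ker}}\bigr)$ close to $|df|^2$ near $(0,0)$ (using $\mathrm{adj}(\tilde T) = -\tilde T$ and $\tilde T \to \star_{\mathrm{st}}$ at the origin). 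Solving the resulting pointwise scalar equation for $\mu$ is once again a division of $O(\exp)$ by a function vanishing to finite order, so $\mu \in O(\exp)$.

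The main obstacle is preserving the $O(\exp)$ class under divisions by polynomially vanishing quantities, and the essential geometric input enabling this is that $f$ has an isolated critical point of finite order $m-1$ at $(0,0)$, guaranteeing $|df|^2 \ge c(x^2+y^2)^{m-1}$ on a small punctured neighborhood and licensing the repeated use of the quotient lemma. Assembling the three steps, $T := \tilde T + \delta T_0 + \mu\,M_{\mathrm{ker}}$ is traceless with $\det T = 1$ and $d(T\,df) = 0$, and the metric $g$ defined by $\star_g = T$ is the desired exponentially small correction of $\tilde g$.
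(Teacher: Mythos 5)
Your proof is correct, and it takes a genuinely different route from the paper. The paper works directly with the metric coefficients $g^{ij}$: it parameterizes the closed $1$-form $\star_g df=-A\,dx+B\,dy$, solves the algebraic system $g^{12}f_x+g^{22}f_y=A$, $g^{11}f_x+g^{12}f_y=B$, $g^{11}g^{22}-(g^{12})^2=1$ by explicit formulas, and handles the resulting divisions by $f_x$ and $f_y$ (which vanish on rays) by restricting to the sectors $\Omega^{\delta}(f_{0x})$, $\Omega^{\delta}(f_{0y})$ and patching; moreover it proceeds in two stages, first building a $C^0$ metric (Proposition~\ref{approxi}) and only then refining the choice of $A,B$ to get $O(\exp)$-proximity to $\tilde g$. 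You instead perturb the Hodge star $T=\tilde T+\delta T$ in a single step: your Poincar\'e primitive $\beta$ plays the role of the paper's chosen $A,B$ (fixing the $1$-form $\star_g df$), and the crucial technical difference is the solver for the underdetermined system $\delta T_0\cdot df=\beta$. The Moore--Penrose pseudoinverse has the single denominator $\det(AA^T)=|df|^4-f_x^2f_y^2\ge\tfrac34|df|^4$, whose leading Taylor term $(f_{0x}^2+f_{0y}^2)^2-f_{0x}^2f_{0y}^2$ is a degree-$4(m-1)$ polynomial with an isolated zero at the origin; this replaces the paper's sector-by-sector case analysis with one uniform application of the $O(\exp)$ quotient lemma. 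The nilpotent kernel matrix $M_{\ker}$ then linearizes the determinant constraint in $\mu$ (since $\det M_{\ker}=0$), and the coefficient $\operatorname{tr}(\operatorname{adj}(X)M_{\ker})$ has leading term $f_{0x}^2+f_{0y}^2=m^2(x^2+y^2)^{m-1}$, again with an isolated zero, so the quotient lemma applies once more. Both arguments rest on the same quotient lemma for $O(\exp)$ functions, but your pseudoinverse observation buys a unified, globally bounded-below denominator and a one-stage proof, whereas the paper's longer route yields the additional standalone fact that for any $f\in\Lambda_m$ there always exists a $C^0$ metric making $f$ harmonic.
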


We set up the machinery which 
starts with a $C^{\infty}$-metric making $f$ harmonic 
``up to order $l$'' at the origin
and produces a $C^l$-metric out of it making $f$ honestly harmonic,
$l=0,1,...,\infty$. Doing this for $l=\infty$ would obviously finish the job.
It turns out, however, that it is convenient to start out slowly with $l=0$,
postponing the case $l=\infty$ until later. Note that the most naive metric ---
the standard one in coordinates $(x,y)$ --- already makes $f$ harmonic 
``up to order 0''
at the origin. So for the next proposition we do not need any further assumptions 
on $f$.

\begin{predlozhenie}
Let $f\in \Lambda_m$. Then
there exists a continuous Riemannian metric $g$ on some open neighbourhood $U$
around $(0,0)\in \mathbb{R}^2$, which makes $f$ harmonic. 
Moreover the metric is smooth in the punctured open neighbourhood 
$U\setminus \{(0,0)\}$.  
\label{approxi}
\end{predlozhenie}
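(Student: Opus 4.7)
My plan is to construct $g$ explicitly on a small punctured disc by \emph{prescribing} $\star_g df$ to be a convenient fixed closed $1$-form, and then to check that the resulting $g$ extends continuously across $(0,0)$ with $g(0,0)=\mathrm{Id}$. The harmonicity of $f$ then comes for free from the closedness of $\star_g df$.

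\textbf{Construction on the punctured disc.} First observe that on a sufficiently small punctured disc $U\setminus\{(0,0)\}$ one has $df\neq 0$: in polar coordinates $|df_0|=m r^{m-1}$, whereas $f-f_0$ has Taylor order $\ge m+1$, giving $|d(f-f_0)|=O(r^m)$, so $|df|\ge (m/2)r^{m-1}$. Put
$$\alpha:=d\bigl(\mathrm{Im}(x+iy)^m\bigr),$$
the standard harmonic conjugate $1$-form of $df_0$ (so $\alpha=\star_{st}df_0$). It is exact, hence closed, and a short computation shows $df_0\wedge\alpha=m^2 r^{2m-2}\,dx\wedge dy$, while $d(f-f_0)\wedge\alpha=O(r^{2m-1})$; thus $df\wedge\alpha>0$ on $U\setminus\{(0,0)\}$ for $U$ small. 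Define $g$ by the two conditions $\det\{g^{ij}\}=1$ and $\star_g df=\alpha$. Writing $df=a_1\,dx+a_2\,dy$, $\alpha=b_1\,dx+b_2\,dy$, $D:=a_1 b_2-a_2 b_1>0$, the Hodge star matrix of Section 2 yields the closed-form expressions
$$g^{11}=\frac{a_2^2+b_2^2}{D},\qquad g^{22}=\frac{a_1^2+b_1^2}{D},\qquad g^{12}=-\frac{a_1 a_2+b_1 b_2}{D},$$
which are smooth, symmetric, have determinant $1$, and are positive definite on $U\setminus\{(0,0)\}$. Since $\alpha$ is closed,
$$\triangle_g f=d\star_g df=d\alpha=0,$$
so $f$ is harmonic with respect to $g$.

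\textbf{Continuous extension across the origin.} The main step is showing $g$ extends continuously with $g(0,0)=\mathrm{Id}$. The formulas above are \emph{invariant under joint positive rescaling} $(a_1,a_2,b_1,b_2)\mapsto(\lambda a_1,\lambda a_2,\lambda b_1,\lambda b_2)$. Since both $df_0$ and $\alpha$ are homogeneous of degree $m-1$, the rescaled pair $r^{-(m-1)}(df_0,\alpha)$ depends only on $\theta$, while $r^{-(m-1)}d(f-f_0)=O(r)$. Scale-invariance gives
$$g^{ij}(r,\theta)=g^{ij}\bigl(r^{-(m-1)}df,\,r^{-(m-1)}\alpha\bigr),$$
and as $r\to 0$ the pair on the right converges uniformly in $\theta$ to $\bigl(r^{-(m-1)}df_0,\,r^{-(m-1)}\alpha\bigr)$. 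Plugging the latter into the formulas and using the identity $\alpha=\star_{st}df_0$ yields $g^{ij}=\delta^{ij}$ identically in $\theta$. Hence $g^{ij}(r,\theta)\to\delta^{ij}$ uniformly, giving the desired continuous extension.

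\textbf{Expected obstacle.} The delicate point is precisely this extension across the origin: both $df$ and $\alpha$ vanish to order $r^{m-1}$ at $(0,0)$, so the pair encodes no direct data there, and the construction is a priori singular at the origin. The argument succeeds only because $\alpha$ is tailored to the leading term $f_0$ — so that the rescaled leading pair is exactly the standard one at every angle $\theta$ — and the scale-invariance of the Hodge-star data then collapses the angular dependence in the limit. Using a less canonical closed $1$-form in place of $\alpha$ would in general produce radial limits of $g$ depending on $\theta$ and destroy continuity at the origin.
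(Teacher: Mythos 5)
Your proof is correct, and it is recognizably the same underlying idea as the paper's — you prescribe the conjugate $1$-form $\star_g df$ to be a fixed closed $1$-form whose leading part is $\star_{st}df_0$, and then solve the resulting algebraic system for $\{g^{ij}\}$ subject to $\det g=1$ — but your execution is genuinely different and cleaner in two respects. First, you write all three components $g^{11},g^{22},g^{12}$ as single rational formulas in $(a_1,a_2,b_1,b_2)$ whose only singularity is $D=df\wedge\alpha=0$; the paper instead writes $g^{11}=(B-g^{12}f_y)/f_x$ and $g^{22}=(A-g^{12}f_x)/f_y$, which force a piecewise construction on two overlapping cones $\Omega_{11},\Omega_{22}$ (to keep $f_x$, resp.\ $f_y$, away from zero) together with a gluing step via the determinant relation $g^{11}=(1+(g^{12})^2)/g^{22}$. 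Your formulas make that case analysis unnecessary. Second, for continuity at the origin you use the scale invariance of the formulas and the degree-$(m-1)$ homogeneity of $df_0$ and $\alpha$ to show $g^{ij}(r,\theta)\to\delta^{ij}$ uniformly in $\theta$; the paper instead estimates each numerator and denominator separately on each cone using \eqref{poly2}, which is more laborious. The one thing the paper's version buys that yours does not: the paper keeps $A$ and $B$ as arbitrary smooth fields with $A_y+B_x=0$ and the correct leading terms, rather than taking $\star_g df=\alpha$ exactly, and that extra freedom is precisely what is spent in Proposition~\ref{diffur1} (by choosing $A_\infty,B_\infty$ from the given $\tilde g$ and correcting by an $O(exp)$ potential) to upgrade continuity to smoothness with $g-\tilde g=O(exp)$. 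So if you intend to continue to the next proposition, you should replace your fixed $\alpha$ by a closed $1$-form with the same leading part but with free $O(r^m)$ tail — your formulas and your scale-invariance argument go through verbatim under that relaxation, since they only use the leading behaviour of the pair $(df,\alpha)$.
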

\begin{proof} Clearly,
$Kerf_{0x}\cap Kerf_{0y}=\{(0,0)\}$.  
Let $g$ be the desired Riemannian metric. Recall that with our the convention that $\det g=1$, the equation $\triangle_{g}f=0$ for $g$ reads as 
\begin{equation}
(g^{12}f_x+g^{22}f_y)_y+(g^{11}f_x+g^{12}f_y)_x=0.
\label{gperb}
\end{equation}
Assume for the moment, that Equation \eqref{gperb} is solved by a 
Riemannian metric $g$ with the regularity we want.
Then the combination 
\begin{equation}
A=g^{12}f_x+g^{22}f_y, 
\label{ej}
\end{equation}
gives us a function on
$\mathbb{R}^2$ with a-priori the same regularity as $g$ has. Analogously 
\begin{equation}
B=g^{11}f_x+g^{12} f_y. 
\label{bi}
\end{equation}
Note that $A_y+B_x=0$. 
Our convention about the determinant 
of $g$ writes out as 
\begin{equation}
g^{11}g^{22}-(g^{12})^2=1.
\label{det}
\end{equation}
The set of equations \eqref{ej}, \eqref{bi} and \eqref{det} can be viewed as 
a system of equations on our matrix elements $\{g^{ij}\}^{i,j=1,2}$. To solve
this system we express $g^{11}$ and $g^{22}$ in terms of $g^{12}$, $A$ and 
$B$ using \eqref{ej} and \eqref{bi}:
\begin{equation}
g^{11}=\frac{B-g^{12}f_y}{f_x},
\label{g11}
\end{equation}
\begin{equation}
g^{22}=\frac{A-g^{12}f_x}{f_y}.
\label{g22}
\end{equation}
and substitute these expressions in \eqref{det}:
$$\frac{B-g^{12}f_y}{f_x}\frac{A-g^{12}f_x}{f_y}-
(g^{12})^2=1, 
$$ and hence,
\begin{equation}
g^{12}=\frac{AB-f_x f_y}{Af_y+Bf_x}
\label{g12}
\end{equation}
Formulas $\eqref{g11}$, $\eqref{g22}$ and $\eqref{g12}$ can be viewed as an 
expression of our matrix elements $g^{11}$, $g^{22}$, $g^{12}$ through the 
functions $A$ and $B$. Of course while writing out these formulas 
we have divided
by zero in several places, but it does not hurt to do this at the moment, 
since our computations were done under the assumption that $g$
%$\{g^{ij}\}^{i,j=1,2}$ 
is well defined on the whole of $U$ a-priory. 

Now we change the 
direction of the logic. We want to solve Equation \eqref{gperb} together with
\eqref{det}, thus obtaining the desired Riemannian metric. For this we give
ourselves smooth functions $A$ and $B$, defined in some open neighbourhood
$U$ around the origin with 
$A_y+B_x=0$. The freedom of this choice will be exploited later. We insert 
the 
functions $A$ and $B$ in the system \eqref{ej}, \eqref{bi} and \eqref{det}
as a right hand side and note that the solution to this system will 
automatically satisfy \eqref{gperb} and \eqref{det}, thus giving the 
Riemannian
metric we want provided that regularity questions are taken care of. 
Unfortunately, the direct
usage of the formulas \eqref{g11}, \eqref{g22} and \eqref{g12} 
in order to solve the system \eqref{ej}, \eqref{bi}, \eqref{det} will run 
into 
problems like division by zero. Therefore, we will do the following. First, we
exploit the freedom in the choice of the functions $A$ and $B$ by fixing 
their
principal parts properly. The higher order terms remain arbitrary. 
Next, we will see, that Formula \eqref{g12} does not have problems in a small 
neighbourhood of the origin, and hence defines a function $g^{12}$ in
this small neighbourhood. The function $g^{11}$ will be defined in two steps.
First, we use Formula \eqref{g11} to defined it away from the set where
the corresponding denominator is small and then we use Equation \eqref{det}
to extend it over the problematic set. The function $g^{22}$ is defined
analogously. The last step is to show that the so defined functions 
$g^{12}$, $g^{11}$ and $g^{22}$ do satisfy the system 
\eqref{ej}, \eqref{bi}, \eqref{det} which is not automatic, because the 
formulas \eqref{g11} and \eqref{g22} do not apply everywhere 
in the domain of definition of the functions  $\{g^{ij}\}^{i,j=1,2}$.
 
Now we carry out this plan. We set $A_{m-1}=f_{0y}$, 
$B_{m-1}=f_{0x}$ to be the principal parts of 
$A=A_{m-1}+A_r$ and $B=B_{m-1}+B_r$ respectively, where
$A_r$ and $B_r$ are left to be arbitrary smooth functions  
of the order higher than $m-1$, subject to the relation $(A_r)_y+(B_r)_x=0$
(the freedom in the choice of $A_r$ and $B_r$ will be exploited in the proof of Proposition \ref{diffur1} when we improve the regularity of $g$ at the origin).
First, we analyze Formula \eqref{g12}: 
$$g^{12}=\frac{AB-f_xf_y}{Af_y+Bf_x}=
\frac{(A_{m-1}+A_r )(B_{m-1}+B_r )-(f_{0x}+\phi_1)(f_{0y}+\phi_2)}
{(A_{m-1}+A_r )(f_{0y}+\phi_2)+(B_{m-1}+B_r )(f_{0x}+\phi_1)}=$$ 
$$=\frac{A_{m-1}B_{m-1}-f_{0x}f_{0y}+r^{12}_n}
{A_{m-1}f_{0y}+B_{m-1}f_{0x}+r^{12}_d}=
\frac{r^{12}_n}{f_{0y}^2+f_{0x}^2+r^{12}_d},$$
where $$r^{12}_n=A_{m-1}B_r +A_r B_{m-1}-f_{0x}\phi_2-f_{0y}\phi_1+
A_r B_r -\phi_1\phi_2  $$ and
$$r^{12}_d=A_{m-1}\phi_2+A_r f_{0y}+B_{m-1}\phi_1+B_r f_{0x}+A_r \phi_2+
B_r \phi_1,$$ where $\phi_1$ and $\phi_2$ 
are partial derivatives of $f-f_0$ with respect to $x$ and $y$ respectively.
Note that $f_{0x}^2+f_{0y}^2$ is a nowhere zero homogeneous polynomial of 
order $2(m-1)$, so the estimate \eqref{poly1} applied to 
$f_{0x}^2+f_{0y}^2$
implies that 
$$\frac{r^{12}_n}{f_{0y}^2+f_{0x}^2}=o(1)$$ at 
$(0,0)$ 
and $$\frac{r^{12}_d}{f_{0y}^2+f_{0x}^2}=o(1)$$ at $(0,0)$. Therefore, the 
function 
$g^{12}$ is well-defined in some neighbourhood $U$ of the origin, belongs 
to the class $C^{\infty}(U \setminus (0,0))\cap C^{0}(U)$ and 
the relation $$\lim_{(x,y)\to (0,0)}g^{12}=g^{12}|_{(x,y)=(0,0)}=0$$ 
holds true. 
Now, we analyze the formulas \eqref{g11} and \eqref{g22} and 
define the functions $g^{11}$ and $g^{22}$. The idea is that 
for each formula we cut out ``problematic'' sectors and work
on those parts of $\mathbb{R}^2$ where we are guaranteed from small or
vanishing denominators. 
Since $f_{0x}=mRe(x+iy)^{m-1}$ is a homogeneous polynomial of order $m-1$,
not identically zero, we fix a small positive $\delta$ and set 
$\Omega_{11}=\Omega^{\delta}(f_{0x})$.
Next, we rewrite \eqref{g11} in a more convenient way:
$$g^{11}=\frac{B-g^{12}f_y}{f_x}=
\frac{B_{m-1}+B_r-g^{12}(f_{0y}+\phi_2)}{f_{0x}+\phi_1}=
\frac{f_{0x}+r^{11}_n}{f_{0x}+\phi_1}=\frac{f_{0x}(1+\frac{r^{11}_n}{f_{0x}})}
{f_{0x}(1+\frac{\phi_1}{f_{0x}})},$$ 
where $r^{11}_n=B_r-g^{12}f_{0y}-g^{12}\phi_2.$
To take a more precise look at Formula \eqref{g11} we restrict ourselves to 
$U\cap \Omega_{11}$. 
Now  Estimate \eqref{poly2} applied to $f_{0x}$ implies that 
$$\frac{r^{11}_n}{f_{0x}}|_{\Omega_{11}}=o(1)$$ at $(0,0)$
and $$\frac{\phi_1}{f_{0x}}|_{\Omega_{11}}=o(1)$$ at $(0,0)$.
 Therefore, $f_x|_{U\cap\Omega_{11}}$ has an isolated zero at the origin,
and the right hand side of \eqref{g11} is well-defined on 
$U\cap \Omega_{11}$ (we shrink the neighbourhood $U$ if necessary). At this 
point we set the function $g^{11}$ to be defined on $U\cap \Omega_{11}$ 
by Formula \eqref{g11}.   
The so defined function $g^{11}$ (only on $U\cap \Omega_{11}$ so far) 
exhibits the following regularity: 
$$g^{11}|_{U \cap \Omega_{11}} 
\in C^{\infty}((U\cap \Omega_{11})\setminus (0,0))
\cap C^0(U \cap \Omega_{11})$$
and the relation 
$$\lim_{(x,y)\to (0,0)}g^{11}|_{U\cap \Omega_{11}}=g^{11}|_{(x,y)=(0,0)}=1$$ 
holds true. 
The latter allows us to assume (by shrinking $U$ further if necessary)
that $g^{11}|_{U\cap \Omega_{11}}$ is nowhere zero. 
Similar discussions apply to Formula \eqref{g22}. In brief,
$\Omega_{22}=\Omega^{\delta}(f_{0y})$,
$$g^{22}=\frac{f_{0y}+r^{22}_n}{f_{0y}+\phi_2},$$ for 
$r^{22}_n$ being a function with the faster decay at $(0,0)$ than 
$(x^2+y^2)^{(m-1)/2}$. 
By the same token as before,  
$$\frac{r^{22}_n}{f_{0y}}|_{\Omega_{22}}=o(1)$$ and 
$$\frac{\phi_2}{f_{0y}}|_{\Omega_{22}}=o(1)$$ at $(0,0)$.
Therefore, the function $g^{22}$ is well-defined on $U \cap \Omega_{22}$
(the neighbourhood $U$ can be shrunk further if needed). 
Moreover, we have 
$$g^{22}|_{U\cap \Omega_{22}} 
\in C^{\infty}((U\cap \Omega_{22})\setminus (0,0))
\cap C^0(U \cap \Omega_{22})$$ and the relation 
$$\lim_{(x,y)\to (0,0)}g^{22}|_{U\cap \Omega_{22}}=g^{22}|_{(x,y)=(0,0)}=1$$ 
holds true. Therefore 
$g^{22}|_{U\cap \Omega_{22}}$ is nowhere zero. Since we know that 
$$Ker(f_{0x}) \cap Ker(f_{0y}) =\{(0,0)\},$$ 
we can choose
$\delta$ small enough and achieve that   
$$\Omega_{11}\cup \Omega_{22}=\mathbb{R}^2$$ and
$$Int(\Omega_{11}\cap \Omega_{22})\ne \emptyset.$$

Now comes a crucial moment. We are to 
extend the functions $g^{11}$ and $g^{22}$ to 
the whole of $U$.
Equation \eqref{det} 
%$$g^{11}g^{22}-(g^{12})^2=1$$ 
holds true on the triple 
intersection $U\cap \Omega_{11}\cap \Omega_{22}$ by the formulas \eqref{g11}
and \eqref{g22}. Since  
$g^{22}|_{U \cap \Omega_{22}}$ is 
nowhere zero this equation equivalently reads as 
\begin{equation}
g^{11}=\frac{1+(g^{12})^2}{g^{22}}.
\label{g112} 
\end{equation}
The right hand side of this equation makes perfect sense and has 
the regularity required for the function $g^{11}$ on 
$U\cap \Omega_{22}$. This allows us to define the function
$g^{11}$ on $U\cap \Omega_{22}$ by \eqref{g112}.
So now we have defined the function $g^{11}$ on $U\cap \Omega_{11}$
via \eqref{g11} and on $U\cap \Omega_{22}$ via \eqref{g112}. The two
definitions overlap on $U\cap \Omega_{11}\cap \Omega_{22}$ and clearly agree
there, since Equation \eqref{det}, where the second definition has come from,
holds true on $U\cap \Omega_{11}\cap \Omega_{22}$ with $g^{11}$ defined
in the first way. Altogether, we have that the function $g^{11}$
is defined and has the regularity we need on both $U\cap \Omega_{11}$ and
$U\cap \Omega_{22}$ and hence on $U$ --- their union. Note that Equation 
\eqref{det}, after we have made this extension, holds true not only on 
$U\cap \Omega_{11}\cap \Omega_{22}$, but on the large set 
$U\cap \Omega_{22}$.
Analogously we extend the function $g^{22}$ from $U\cap \Omega_{22}$
to the whole of $U$. 

We remark that Equation \eqref{det} now holds true not
only on $U\cap \Omega_{22}$, but on the whole of $U$. 
Now we have come to the last step, i.e. we are to 
show that the so defined functions $g^{12}$, $g^{11}$ and $g^{22}$ do 
actually
satisfy the system \eqref{ej}, \eqref{bi}, \eqref{det} and hence both  
\eqref{gperb} and \eqref{det}, therefore, giving us the Riemannian metric 
$g$ 
which makes $f$ harmonic and has the 
$C^{\infty}(U \setminus (0,0))\cap C^0(U)$ regularity. \par
Equation \eqref{det} is satisfied automatically by the remark above. For 
\eqref{bi} we start we a point $(x,y)\in U$ and consider $g^{11}f_x$ 
at this point. Here we distinguish between the 
following two cases:\\ 1) $(x,y)\in \Omega_{11}$ and \\ 
2) $(x,y)\in \Omega_{22}$.\\
In the first case we are done by Formula \eqref{g11}. In the second
case Formula \eqref{g11} does not apply, but fortunately \eqref{g22} does
apply. For this we carry out an easy computation: 
$$g^{11}f_x|_{(x,y)}=
g^{11}g^{22}f_x f_y \frac{1}{g^{22}f_y}|_{(x,y)}=
(1+(g^{12})^2)f_x f_y \frac{1}{g^{22}f_y}|_{(x,y)}=$$ $$=
(AB-g^{12}(Af_y+Bf_x)+(g^{12})^2f_x f_y)
\frac{1}{g^{22}f_y}|_{(x,y)}=$$ $$=
\frac{(B-g^{12}f_y)(A-g^{12}f_x)}{g^{22}f_y}|_{(x,y)}=
(B-g^{12}f_y)|_{(x,y)}.$$ The first equality sign is valid, because 
$g^{22}$ is nowhere zero and 
$f_y|_{U\cap \Omega_{22}}$ has a unique zero at the origin. 
The second one is valid by \eqref{det}. 
The third one easily follows from the definition of $g^{12}$.
The fourth one is just an elementary algebra. The fifth one follows
from \eqref{g22}. This shows \eqref{bi}. It can be shown completely 
analogously that Equation \eqref{ej} is also satisfied. \end{proof}

Next proposition is the final step. We take up the case $l=\infty$. That is 
we find a smooth Riemannian metric $g$ making $f$ harmonic with 
$g-\tilde g=O(exp)$.
  
\begin{predlozhenie} Under the assumptions of Proposition \ref{techfinal}
the regularity of the Riemannian metric $g$ in Proposition \ref{approxi}
can be improved to $C^{\infty}$, moreover, we can achieve that 
$$g-\tilde g=O(exp).$$
\label{diffur1}
\end{predlozhenie}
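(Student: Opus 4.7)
The plan is to revisit the construction from Proposition~\ref{approxi}, but now to drive all of the higher-order data of the auxiliary functions $A, B$ out of the given approximate solution $\tilde g$, up to an $O(\exp)$ correction that enforces the exact relation $A_y + B_x = 0$.

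First I would set $\tilde A := \tilde g^{12} f_x + \tilde g^{22} f_y$ and $\tilde B := \tilde g^{11} f_x + \tilde g^{12} f_y$, so that the hypothesis $\triangle_{\tilde g} f = O(\exp)$ translates into $\tilde A_y + \tilde B_x = O(\exp)$. Because $\tilde g_{(0,0)} = Id$ and the leading terms of $f_x, f_y$ are $f_{0x}, f_{0y}$ of order $m-1$, the principal parts of $\tilde A, \tilde B$ are exactly $f_{0y}, f_{0x}$, matching the normalization $A_{m-1} = f_{0y}$, $B_{m-1} = f_{0x}$ used in the proof of Proposition~\ref{approxi}. To force $A_y + B_x = 0$ on the nose, I would set $\omega := \tilde A_y + \tilde B_x$, $\beta := 0$, $\alpha(x,y) := -\int_0^y \omega(x,t)\,dt$, and then define $A := \tilde A + \alpha$, $B := \tilde B + \beta$. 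Since $\omega$ and all its partial derivatives vanish at $(0,0)$ faster than any polynomial, the same holds for $\alpha$ by a direct estimate of the integral, so $\alpha \in O(\exp)$ by Lemma~1 of Section~\ref{O(exp)}. By construction $A_y + B_x = 0$, the leading terms of $A, B$ are unchanged, and the remainders $A - f_{0y}$, $B - f_{0x}$ are smooth and of order $\ge m$, so $(A, B)$ is a legitimate input to the recipe of Proposition~\ref{approxi}.

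Feeding these $(A, B)$ into that recipe yields a continuous Riemannian metric $g$ on some $U \ni (0,0)$, smooth on $U \setminus \{(0,0)\}$, satisfying $\triangle_g f = 0$. The key step is then to subtract the formulas \eqref{g12}, \eqref{g11}, \eqref{g22}, \eqref{g112} from their $\tilde g$-analogues and verify $g^{ij} - \tilde g^{ij} \in O(\exp)$ component by component. A short algebraic manipulation in each case pulls $\alpha$ (and $\beta = 0$) out of the numerator, so every numerator lies in $O(\exp)$; the denominators that appear are $A f_y + B f_x$ (leading term $f_{0x}^2 + f_{0y}^2$, with isolated zero at the origin), $f_x$ on $\Omega_{11}$, $f_y$ on $\Omega_{22}$ (both controlled by the sector estimates of Section~\ref{homog}), and $g^{22}$ on $U \cap \Omega_{22}$ (bounded away from zero). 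Lemma~2 of Section~\ref{O(exp)}, together with the sector remark at the end of that section, then gives $g^{ij} - \tilde g^{ij} \in O(\exp)$ on $U$. In particular each such difference is smooth and flat at the origin, so $g \in C^\infty(U)$ and $g - \tilde g = O(\exp)$, which is exactly Proposition~\ref{diffur1}.

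The main obstacle is the bookkeeping that ensures every cancellation really leaves an $\alpha$-factor in the numerator while the denominator retains a controllable zero structure. The most delicate component is $g^{11}$: on $\Omega_{11}$ one uses \eqref{g11} with $f_x$ controlled by the sector bound, while on $U \cap \Omega_{22}$ one switches to \eqref{g112}, whose denominator $g^{22}$ is bounded below and whose $(g^{12})^2$ term produces only the benign combination $(g^{12} - \tilde g^{12})(g^{12} + \tilde g^{12})$; the two expressions for $g^{11}$ agree on the overlap because Equation~\eqref{det} is satisfied there by both $g$ and $\tilde g$. The component $g^{22}$ is handled symmetrically, and $g^{12}$ by the single global formula \eqref{g12}.
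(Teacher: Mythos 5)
Your proposal is correct and follows essentially the same route as the paper: take $A,B$ to be the combinations $\tilde g^{12}f_x+\tilde g^{22}f_y$ and $\tilde g^{11}f_x+\tilde g^{12}f_y$, add an $O(\exp)$ antiderivative of the defect $\triangle_{\tilde g}f$ to restore $A_y+B_x=0$ exactly, and then re-run the recipe of Proposition~\ref{approxi} while checking $g^{ij}-\tilde g^{ij}\in O(\exp)$ component by component using the sector estimates. The only cosmetic differences are that you integrate the defect in $y$ and correct $A$ (the paper integrates in $x$ and corrects $B$), and you drop the arbitrary $\phi\in O(\exp)$ gauge freedom the paper carries along.
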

\begin{proof}  
To take care of the regularity of $g$ constructed in Proposition \ref{approxi}
at $(0,0)$ we exploit the freedom in the choice of functions $A$ and $B$. 
Set $$\kappa:=\triangle_{\tilde g}f=
({\tilde g}^{12}f_x+{\tilde g}^{22}f_y)_y+
({\tilde g}^{11}f_x+{\tilde g}^{12}f_y)_x.$$
We shrink $U$ if necessary to make it convex and squeeze it into the domain of definition of $\tilde g$.
We introduce a smooth function $\xi$, defined on $U$, by the formula:
$$\xi(x,y)=\int_0^x\kappa(\tilde x,y)d\tilde x.$$ Clearly, the function $\xi$
is of the class $O(exp)$. 
We set $$A_{\infty}={\tilde g}^{12}f_x+{\tilde g}^{22}f_y,$$
$$B_{\infty}={\tilde g}^{11}f_x+{\tilde g}^{12}f_y$$ and 
pick some function $\phi\in O(exp)$ arbitrarily. 
Set $A=A_{\infty}+\phi_x$ and $B=B_{\infty}-\xi-\phi_y$.
First, we check, that the principal parts $A_{m-1}$ and $B_{m-1}$ of 
$A$ and $B$
coincide with those chosen in the proof of Proposition \ref{approxi}. Indeed,
$$A_{m-1}={\tilde g}^{12}_{(0,0)}f_{0x}+{\tilde g}^{22}_{(0,0)}f_{0y}=f_{0y}$$ 
and
$$B_{m-1}={\tilde g}^{11}_{(0,0)}f_{0x}+{\tilde g}^{12}_{(0,0)}f_{0y}=f_{0x}$$
as before. 
Next, we check, that $A$ and $B$ satisfy the condition 
$A_y+B_x=0$. Indeed, $$A_y+B_x=A_{\infty y}+\phi_{xy}+
B_{\infty x}-\phi_{yx}-\xi_x=\kappa-\kappa+\phi_{xy}-\phi_{yx}=0.$$ 
Now we analyze 
formula \eqref{g12} for the off diagonal element of the metric deeper
than previously. Basically, it follows the same pattern as before,
but now we want infinite differentiability of $g^{12}$ 
at the origin instead of just continuity.
$$g^{12}=\frac{AB-f_xf_y}{Af_y+Bf_x}=
\frac{(A_{\infty}+\phi_x)(B_{\infty}-\xi-\phi_y)-f_xf_y}
{(A_{\infty}+\phi_x)f_y+(B_{\infty}-\xi-\phi_y)f_x}=$$ $$=
\frac{A_{\infty}B_{\infty}-f_xf_y+r^{12}_n}
{A_{\infty}f_y+B_{\infty}f_x+r^{12}_d}=
\frac{\frac{A_{\infty}B_{\infty}-f_xf_y}{A_{\infty}f_y+B_{\infty}f_x}+
\tilde r^{12}_n}{1+\tilde r^{12}_d}=
\frac{A_{\infty}B_{\infty}-f_xf_y}{A_{\infty}f_y+B_{\infty}f_x}+r,$$
where the functions $r^{12}_n$, $r^{12}_d$, $\tilde r^{12}_n$, 
$\tilde r^{12}_d$, $r$ are all of the class $O(exp)$ and the neighbourhood
$U$ around the origin we are working at is taken to be small enough 
for $A_{\infty}f_y+B_{\infty}f_x$
to be nonzero in the punctured neighbourhood.
By the choice of $A_{\infty}$ and $B_{\infty}$, we have that  
$$\frac{A_{\infty}B_{\infty}-f_xf_y}{A_{\infty}f_y+B_{\infty}f_x}=
{\tilde g}^{12},$$
therefore $g^{12}={\tilde g}^{12}+r$, in particular $g^{12}$ is smooth. To 
work out the desired regularity for diagonal elements is a little harder.
First, we consider the difference $g^{11}-{\tilde g}^{11}$ restricted to the 
set
$U\cap \Omega_{11}$, where the formula \eqref{g11} works: 
$$(g^{11}-{\tilde g}^{11})|_{U\cap\Omega_{11}}=\frac{B-g^{12}f_y}{f_x}-
{\tilde g}^{11}=
\frac{B_{\infty}-\xi-\phi_y-({\tilde g}^{12}+r)f_y}{f_x}-{\tilde g}^{11}=$$ $$=
\frac{B_{\infty}-{\tilde g}^{12}f_y+r^{11}_n}{f_x}=
{\tilde g}^{11}+\tilde r^{11}-{\tilde g}^{11}=
\tilde r^{11},$$ where the function $r^{11}_n$ is of the class $O(exp)$
and we have to be a little more careful about the function $\tilde r^{11}$.
It is smooth on the set $U\cap \Omega_{11}\setminus \{(0,0)\}$ and decays 
at $(0,0)$ together with all its derivatives faster than any polynomial.
Analogously, $g^{22}|_{U\cap \Omega_{22}}={\tilde g}^{22}+\tilde r^{22}$, where the 
function $\tilde r^{22}$ is smooth on the set 
$U \cap \Omega_{22}\setminus \{(0,0)\}$ and decays at $(0,0)$ 
together with all its
derivatives faster than any polynomial. This allows us to write out
the difference $g^{11}-{\tilde g}^{11}$ restricted to the set
$U\cap \Omega_{22}$: 
$$(g^{11}-{\tilde g}^{11})|_{U \cap \Omega_{22}}=\frac{1+(g^{12})^2}{g^{22}}-
{\tilde g}^{11}=\frac{1+({\tilde g}^{12})^2+2{\tilde g}^{12}r+r^2}
{{\tilde g}^{22}+\tilde r^{22}}-{\tilde g}^{11}=$$ $$=
\frac{1+({\tilde g}^{12})^2}{{\tilde g}^{22}}+\hat r^{11}-{\tilde g}^{11}={\tilde g}^{11}+\hat r^{11}-{\tilde g}^{11}=
\hat r^{11},$$ where the function $\hat r^{11}$
is smooth on the set $(U \cap \Omega_{22})\setminus \{(0,0)\}$ and decays at 
$(0,0)$ together with all its derivatives faster than any polynomial.
Altogether, we have that the difference $g^{11}-{\tilde g}^{11}$ is smooth in 
a punctured neighbourhood of $(0,0)$ and decays at $(0,0)$ together with 
all its derivatives faster than any polynomial (in the above calculations
we shrink the neighbourhood $U$ of the origin whenever necessary, to 
keep track of the denominators). Consequently, the difference
$g^{11}-{\tilde g}^{11}$ is of the class $O(exp)$. In particular, the upper 
left element $g^{11}$ of the metric is smooth. The lower right element $g^{22}$
can be treated analogously. The above calculations show that the metric $g$ 
is smooth and moreover, $g-\tilde g=O(exp)$. \end{proof}

\end{document}